\newcommand{\tr}{\mbox{ tr}}
\newcommand{\ba}{\begin{array}}
\newcommand{\ea}{\end{array}} 
\newtheorem{proposition}{Proposition}[section]
\newtheorem{theorem}[proposition]{Theorem}
\newtheorem{lemma}[proposition]{Lemma}
\newtheorem{corollary}[proposition]{Corollary}
\theoremstyle{definition}
\newtheorem{definition}[proposition]{Definition}
\newtheorem{example}[proposition]{Example}
\theoremstyle{remark}
\numberwithin{equation}{section}
\numberwithin{proposition}{section}
\begin{document}

\centerline{\Large \bf A cohomological approach to immersed }
\vskip 0.25cm
\centerline{\Large \bf  submanifolds via integrable systems}
\vskip 0.5cm
\centerline{A.M. Grundland$^{\dagger}$ and  J. de Lucas$^{\ddagger}$}
\vskip 0.6cm
\centerline{$^{\dagger}$Centre de Recherches Math\'ematiques, Universit\'e de Montr\'eal, }
\centerline{Montr\'eal CP 6128 (QC) H3C 3J7, Canada}
\centerline{$^{\dagger}$Department of Mathematics and Computer Science, Universit\'e du Qu\'ebec,}
\centerline{Trois-Rivi\`eres CP 500 (QC) G9A 5H7, Canada}
\vskip 0.4cm
\centerline{$^{\ddagger}$Department of Mathematical Methods in Physics, University of Warsaw,}
\centerline{ul. Pasteura 5, Warszawa 02-093, Poland}

\begin{abstract}
A geometric approach to immersion formulas for soliton surfaces is provided through new cohomologies on spaces of special types of $\mathfrak{g}$-valued differential forms. This leads us to introduce  Poincar\'e-type lemmas for these cohomologies, which appropriately describe the integrability conditions of Lax pairs associated with systems of PDEs. Our methods clarify the structure and properties of the deformations and soliton surfaces for the aforesaid Lax pairs. Our findings also allow for the generalization of the theory of soliton surfaces in Lie algebras to general soliton submanifolds. Techniques from the theory of  infinite-dimensional jet manifolds and diffieties enable us to justify certain common assumptions of the theory of soliton surfaces. Theoretical results are illustrated through $\mathbb{C}P^{N-1}$ sigma models.
\end{abstract}

\noindent{\it Keywords:} cohomology, $\mathbb{C}P^{N-1}$ sigma model,  generalized symmetries, $\mathfrak{g}$-valued differential forms, $\mathfrak{g}$-valued de Rham cohomology, integrable systems, immersion formulas, soliton surfaces

\medskip

\noindent{\it AMS numbers:} {35Q53, 35Q58, 53A05}


\section{Introduction}\label{Introduction}\setcounter{equation}{0}
Integrable models and their continuous deformations under various types of dynamics have produced considerable interest in various branches of mathematics, physics, and biology (see \cite{BE00,Ch83,DGW96,Da91,GPW92,La03,NPW92,OLX99,PS91,Sa94,So52} for details). In fact, the motivation for this research topic came largely from applications like the growth of crystals \cite{NPW92}, quantum field theory models \cite{DGW96,NPW92,OLX99}, or the motion of boundaries between regions of different viscosities and densities \cite{CJZ89,So52}.

The progress in the analytic description of surfaces obtained from nonlinear PDEs, e.g. soliton or constant curvature surfaces \cite{Bo94,Ta95}, has been rapid and has resulted in many new techniques and theoretical approaches. Some of the foremost relevant developments have occurred in the study of soliton surfaces immersed in Lie algebras by using techniques from the theory of completely integrable systems \cite{Ca53,Ci97,Ci07,DV00,DS92,Ei78,FG96,FGFL00,RS00,Sy82,Sy95}. 

The possibility of using a linear spectral problem (LSP) to represent a moving frame on a soliton surface has yielded many findings concerning their intrinsic geometric properties  \cite{BBT06,Bo94,He01,Ko96,Mi86,MSS91}. The spectral parameter in the LSP describes deformations of soliton surfaces preserving their properties in such a way that integrable surfaces come in a family \cite{Bo94}. These surfaces are characterized by fundamental forms whose coefficients satisfy the Gauss-Weingarten and Gauss-Mainardi-Codazzi equations. It has recently proved fruitful to apply such a characterization of soliton surfaces to $\mathbb{C}P^{N-1}$ sigma models via their immersion formulas in Lie algebras. They have also been shown to play an essential role in many other problems of a physical nature (see \cite{BBT06,GG10,GP78,GP12,Ma02,ZM79} and references therein). 

The construction of the soliton surfaces related to the completely integrable $\mathbb{C}P^{N-1}$ sigma model has been accomplished by representing the Euler-Lagrange equations for this model as a conservation law which in turn provides a closed $\mathfrak{su}(N)$-valued differential one-form on the surfaces. This is the so-called {\it generalized Weierstrass formula for immersion} \cite{GY09,KL99}.

The construction of smooth orientable soliton surfaces related to completely integrable models in the sense of  admitting a LSP problem was pioneered by A. Sym \cite{Sy82,Sy95}. His technique exploits the conformal invariance of the zero-curvature representation of the LSP relative to the spectral parameter \cite{Bo94}. Another approach for determining such surfaces, formulated by Cie\'sli\'nski and Doliwa \cite{Ci97,Ci07,DS92}, is based on the use of gauge symmetries of the LSP. Fokas and Gel'fand \cite{FG96,FGFL00} developed a third approach by  using  the LSP for integrable systems and their Lie symmetries to derive families of soliton surfaces. In all these cases soliton surfaces are described through the so-called {\it immersion formulas}. Most recently, a reformulation and extension of the Fokas-Gel'fand immersion formula has been performed through the formalism of generalized vector fields and their actions on jet spaces in \cite{Grundland15,GLM16,GP11,GP12}. This extension has provided the necessary and sufficient conditions for the existence  of soliton surfaces in terms of the symmetries of the LSP and integrable models. It also described the relations between the previously known immersion formulas.

This paper aims to provide a unifying approach to immersion formulas via cohomological and geometric techniques. Our procedure provides simple geometric proofs for the theoretical results in the previous literature on the topic, simplifies expressions for immersion formulas via geometric structures, and extends the formalism of immersion formulas to create a theory of immersion formulas for soliton submanifolds and generalized Lax pairs with many potential applications (see e.g. \cite{Ablo82,Br13,Mana06,Xiu16,Zakh94} and references therein). 
 
Let $\mathfrak{g}$ be a Lie algebra and let $M,N$ be manifolds. Consider an integrable system of PDEs whose independent and dependent variables are coordinates on $M$ and $N$ respectively. The first idea of the paper is to give a more precise description of the standard objects appearing in the study of soliton surfaces for our system of PDEs, e.g. its LSPs, deformations, and immersion formulas, by the hereafter defined {\it parametrized $\mathfrak{g}$-valued differential forms on-shell} on $M$. In a nutshell, a parametrized $\mathfrak{g}$-valued differential form on-shell is a family of $\mathfrak{g}$-valued differential forms on $M$ parametrized by particular solutions of the integrable system of PDEs and the spectral parameter $\lambda$ of a corresponding LSP. More generally, the {\it parametrized $\mathfrak{g}$-valued differential forms} on $M$ are $\mathfrak{g}$-valued differential forms on $M$ described by arbitrary functions from $M$ to $N$ that need not be particular solutions of our initial system of PDEs. The parametrized $\mathfrak{g}$-valued differential forms will also play a relevant role in the description of Lax pairs and immersion formulas. If not otherwise stated, it is hereafter assumed for simplicity that the independent variables of all systems of PDEs and types of $\mathfrak{g}$-differential forms are defined on $M$.

The space of parametrized $\mathfrak{g}$-valued differential forms, say $\Omega_\mathcal{N}(M)\otimes \mathfrak{g}$, is endowed with a differential ${\bf d}$ leading to a cochain complex (see \cite{Ma99} for details on cochain complexes)
\begin{equation}\label{chain1}
C_\mathcal{N}^\infty(M)\otimes \mathfrak{g}\stackrel{{\rm\bf d}}{\longrightarrow} \Omega_\mathcal{N}^1(M)\otimes \mathfrak{g}\stackrel{{\rm\bf d}}{\longrightarrow}\Omega_\mathcal{N}^2(M)\otimes\mathfrak{g}\stackrel{{\rm\bf d}}{\longrightarrow}\ldots 
\end{equation}
where $\Omega^k_\mathcal{N}(M)\otimes \mathfrak{g}$, with $k\in \mathbb{Z}$, is the space of parametrized $\mathfrak{g}$-valued differential $k$-forms.
A LSP problem is then proved to amount to a parametrized $\mathfrak{g}$-valued differential one-form $\omega\in \Omega^1_\mathcal{N}(M)\otimes\mathfrak{g}$. This allows us to define a second operator ${\bf d}_{2\omega}$ on the above cochain complex satisfying the condition that ${\bf d}_{2\omega} \omega$ vanishes exactly on the solutions of the system of PDEs under consideration. We then say that ${\bf d}_{2\omega}\omega=0$ on-shell.
 
The space $\Omega_\mathcal{S}(M)\otimes\mathfrak{g}$ of  parametrized $\mathfrak{g}$-valued differential  forms  on-shell can be endowed with two differentials ${\bf d}$ and ${\bf d}_{2\omega}$ inducing new cochain complexes
\begin{equation}\label{chain2}
C_\mathcal{S}^\infty(M)\otimes \mathfrak{g}\stackrel{{\bf d},{\bf d}_{2\omega}}{\longrightarrow} \Omega_\mathcal{S}^1(M)\otimes \mathfrak{g}\stackrel{{\bf d},{\bf d}_{2\omega}}{\longrightarrow}\Omega_\mathcal{S}^2(M)\otimes\mathfrak{g}\stackrel{{\bf d},{\bf d}_{2\omega}}{\longrightarrow}\ldots 
\end{equation}
where $\Omega^k_\mathcal{S}(M)\otimes \mathfrak{g}$, with $k\in \mathbb{Z}$, is the space of parametrized $\mathfrak{g}$-valued differential $k$-forms on-shell.
In other words, ${\bf d}^2_{2\omega}=0$ and ${\bf d}^2=0$ on $\Omega_{\mathcal{S}}(M)\otimes\mathfrak{g}$. We extend the Poincar\'e Lemma for the standard de Rham cohomology to this new realm so as to prove that every element $\vartheta\in \Omega^k_\mathcal{S}(M)\otimes \mathfrak{g}$ is {\it ${\bf d}_{2\omega}$-closed}, i.e. ${\bf d}_{2\omega}\vartheta=0$, if and only if it is locally exact, i.e. there exists for any arbitrary point $p\in M$ an open $U\subset M$ containing $p$ and $\varphi\in \Omega^{k-1}_\mathcal{S}(U)\otimes \mathfrak{g}$ such that $\vartheta={\bf d}_{2\omega}\varphi$ on $U$. 

Subsequently, we prove that a deformation of the LSP described by $\omega$ is equivalent to a ${\bf d}_{2\omega}$-closed parametrized $\mathfrak{g}$-valued differential one-form on-shell $\Upsilon$ , i.e. ${\bf d}_{2\omega}\Upsilon=0$. The immersion formulas of Cie\'sli\'nski--Doliwa and Fokas--Gel'fand provide particular solutions $\Upsilon$ of this equation for  $M=
\mathbb{R}^2$. Since ${\bf d}_{2\omega}^2=0$, every ${\bf d}_{2\omega}S$, with $S
\in C_\mathcal{S}^\infty(M)\otimes \mathfrak{g}$, is a ${\bf d}_{2\omega}$-closed parametrized $\mathfrak{g}$-valued differential one-form on-shell. Hence, every $\Upsilon\in \Omega^1_\mathcal{S}(M)\otimes \mathfrak{g}$ such that $\Upsilon:={\bf d}_{2\omega}S$ gives rise to a deformation of the LSP related to $\omega$. A very simple lemma allows us to retrieve  the Cie\'sli\'nski-Doliwa immersion formula from this latter fact. The Fokas-Gel'fand and Sym-Tafel immersion formulas can be explained in a slightly more involved but similar way. In general, their results can easily be retrieved from simple geometric considerations.

Geometrically, we show that the immersion formulas induced by an infinitesimal deformation $\Upsilon$ take the form $F={\rm Ad}_{\Phi^{-1}}\Upsilon$, where $\Phi$ stands for a family of solutions for the LSP induced by $\omega$ parametrized by its spectral parameter and solutions of the associated system of PDEs. Since the corresponding deformations are given by a ${\bf d}_{2\omega}$-closed $\Upsilon$, our extended Poincar\'e Lemmas ensure that $\Upsilon$ is locally exact, i.e. $\Upsilon={\bf d}_{2\omega}S$ for an $S\in C_\mathcal{S}^\infty(M)\otimes \mathfrak{g}$. The function $S$ permits us to obtain the immersion formula associated with $\Upsilon$ for all known immersion formulas. For instance, our simple Lemma \ref{lem:closed} enables us to retrieve the Cie\'sli\'nski-Doliwa immersion formula immediately as $F={\rm Ad}_{\Phi^{-1}}{\bf d}_{2\omega}S={\bf d}{\rm Ad}_{\Phi^{-1}}S$.

Different types of systems of PDEs appearing in the study of immersion formulas can be described as different types of parametrized $\mathfrak{g}$-valued differential forms. Our cohomologies describe the integrability conditions for such systems of PDEs. Local solutions are related to closed forms and global solutions to exact ones. Hence, the cohomology groups related to our cohomologies reflect the nature of the solutions of PDEs. 

Results on the characterization of $\lambda$-conformal symmetries, symmetries of the system of PDEs, the existence of gauges for immersion formulas, and the presence of gauge transformations are easily proved by using our cohomologies. 

The parametrized $\mathfrak{g}$-valued differential forms can be generated by introducing a class of $\mathfrak{g}$-valued differential forms on jet spaces. An analogue of our cohomological procedures is developed to deal with this jet approach. This requires the use of techniques from the theory of infinite-dimensional jet manifolds and diffities \cite{Vi01}. As a particular case, this retrieves the jet approach to immersion formulas given in \cite{GLM16}. 

Finally, all previous results are coordinate-free and they do not depend on the base manifold $M$. Hence, the whole theory can be extended to immersion formulas and Lax pairs on any $M$, e.g. to immersion formulas for submanifolds in Lie algebras. 

This paper is organized as follows. In Section 2  we briefly summarize some relevant results on jet bundles and the geometry of differential equations to be used hereafter. Section 3 is concerned with the description of the new cohomologies proposed in this work. Sections 3 and 4 propose and analyse useful new types of $\mathfrak{g}$-valued differential forms in the theory of immersion formulas and soliton submanifolds. The basic facts on LSPs and their description in terms of our new types of $\mathfrak{g}$-valued differential forms are introduced in Section 5. Subsequently, the theory of immersion formulas for general soliton submanifolds through cohomological and geometrical techniques is addressed in Section 6. The relation between different immersion formulas, e.g. the previously described multidimensional generalizations of the Sym--Tafel (ST), Cie\'sli\'nski--Doliwa (CD) and Fokas--Gel'fand (FG) formulas, are detailed in Section 7. Next, Section 8 concerns the special case of immersion formulas related to Lax pairs describing solutions of PDEs obeying a certain boundary condition. Our techniques are illustrated through soliton surfaces for $\mathbb{C}P^{N-1}$ sigma models  in Section 9. The main contributions of the paper and topics for further research are summarized in Section 10.

\section{On classical and generalized Lie symmetries}\label{ClaLie}\setcounter{equation}{0}
This section reviews the main notions to be used hereafter: finite and infinite-dimensional jet bundles, Lie symmetries, and related properties (see \cite{Ol93,Vi01,KV99} for details).  The summation convention over repeated indices is hereafter employed. 

As previously, $M$ and $N$ are differential manifolds whose coordinates stand for the independent and dependent variables, respectively, of the system of partial differential equations under consideration. We write $\overline{a,b}:=\{a,a+1,\ldots,b-1,b\}$ for $a<b\in \mathbb{Z}$, and we assume $M$ and $N$ to have local coordinates $x^\alpha$, for $\alpha\in\overline{1,m}$ and $u^i$, with $i\in \overline{1,n}$, correspondingly.

Let $J^p$ be the $p$-order jet bundle related to the trivial bundle $(N\times M, M,\pi)$ where $\pi:
N\times M\rightarrow M$ is the projection onto $M$. As usual, we define $J^0:=M\times N$. The coordinates of $J^p$ are given by $x^\alpha$, $u^i$, and $u^i_J$,  where $J:=(j_1,\ldots,j_p)$ is a  multi-index with $1\leq j_1\leq \ldots\leq j_p\leq m$ and length $|J|=p$. If $|J|=0$, then we set $u^i_J:=u^i$. We denote by $j^p_xu:=(x,u_J)$ a generic element of $J^p$. Given a section $s\in \Gamma(\pi)$, say $s(x):=(x,u(x))$, its {\it prolongation} to $J^p$ is denoted by $j^ps$ and reads $j^ps(x):=(x,u(x),{\rm d}u/{\rm d}x(x),\ldots, {\rm d}^pu(x)/{\rm d}x^p)$.  Every function $u(x):M\rightarrow N$ amounts to a section $s_u:x\in M\mapsto (x,u(x))\in M\times N$ of $J^0$ and vice versa.  This motivates us to identify $u$ and its related section $s$ so as to simplify the notation. The main geometric structure on $J^p$, the so-called {\it Cartan distribution} $\mathcal{C}^p$, is the smallest distribution on $J^p$ tangent to all prolongations $j^ps$ for an arbitrary $s\in\Gamma(\pi)$ \cite{Ol93}.

The space $J^\infty$ is assumed to be the space given by the inductive limit of all $J^p$. The space of smooth functions on $J^\infty$ reads $C^\infty(J^\infty):=\cup_{p\in \overline{\mathbb{N}}}C^\infty(J^p)$ where $\overline{\mathbb{N}}:=\mathbb{N}\cup \{0\}$. In other words, the smooth functions on $J^\infty$ are smooth functions depending on a finite, but undetermined, set of variables $x,u_J$. Previous construction allows us to consider $J^\infty$ as a particular type of manifold. Subsequently, $[u]$ stands for an arbitrary element of $J^\infty$. The definition of the Cartan distribution on finite-dimensional jet bundles can be extended to $J^\infty$ giving rise to an involutive distribution $\mathcal{C}$ on $J^\infty$ (see  \cite{KV11,Vi01} for details). 

A vector field $X$ on $J^0$ can be written in local coordinates as
\begin{equation}
X=\xi^\alpha(x,u)\partial_{\alpha}+\varphi^i(x,u)\partial_{i},
\end{equation}
where $\partial_{\alpha}:={\partial}/{\partial x^\alpha}$, for $\alpha\in \overline{1,m}$, and  $\partial_{i}:={\partial}/{\partial u^i}$ for $i\in \overline{1, n}$. The vector field $X$ gives rise to its so-called {\it prolonged vector field} ${\rm pr}\,X$ on  $J^\infty$: the only vector field on $J^\infty$ leaving the space of vector fields taking values in $\mathcal{C}$ invariant (relative to the Lie bracket of vector fields) and projecting onto $X$. Its expression in coordinates reads \cite{Ol93,KV99}
\begin{equation*}
\hspace{-2cm}{\rm pr}X:=\xi^\alpha\partial_{\alpha}+\varphi_J^i\frac{\partial}{\partial u^i_J},\qquad 
\varphi^i_J:=D_JR^i+\xi^\alpha u^i_{J,\alpha}, \qquad R^i:=\varphi^i-\xi^\alpha u^i_\alpha,
\end{equation*}
where the $R^i$ are the so-called {\it characteristics} of the vector field $X$ and $D_J:=D_{j_1}\ldots D_{j_p}$ for
\begin{equation}
D_\alpha=\partial_{\alpha}+u^i_{J,\alpha}\frac{\partial}{\partial u^i_J},\hspace{8mm}\alpha\in\overline{1,m},
\end{equation}
where $u^i_{J,\alpha}$ represents the variable $u^i_{J'}$ with $J':=(j_1,\ldots, j_p,\alpha)$ and $J$ is an arbitrary multi-index. 
Geometrically, $j^pX$ is the unique vector field on $J^p$ projecting onto $X$ on $M\times N$ that leaves the vector fields taking the Cartan distribution $\mathcal{C}^p$ invariant (with respect to its action by Lie brackets). Alternatively, $j^pX$ can be defined as the restriction of ${\rm pr}\,X$ to functions in $C^\infty(J^p)$.
 
Equivalently, ${\rm pr}\,X$ can be written as
\begin{equation}
{\rm pr}\,X=\xi^\alpha D_\alpha+{\rm pr}\,X_R,\qquad X_R:=R^i\frac{\partial}{\partial u^i}.
\end{equation}

The vector fields on $J^p$ are always related to a uni-parametric group of diffeomorphisms describing their flow. Meanwhile, the vector fields on $J^\infty$ are not generally associated with any such a group of diffeomorphisms \cite{KV99}. Nevertheless, there exist relevant types of vector fields on $J^\infty$ that admit a uni-parametric group of transformations on $J^\infty$, e.g. the evolutionary vector fields. An {\it evolutionary vector field} is a vector field  on $J^\infty$ that projects onto each $J^p$ and whose expression on $J^\infty$ can be considered as the limit of its projections. For instance, the so-called {\it total derivatives} $D_\alpha$ are evolution vector fields \cite{KV99}. 

Let us define on $J^p$ a system of partial differential equations (PDEs) in $m$ independent and $n$ dependent variables of the form 
\begin{equation}
\Delta^\mu(j^p_xu)=0,\qquad \mu=\overline{1,s}\label{PDE}.
\end{equation}
A particular solution of (\ref{PDE}) is a map $u(x)$ from $M$ to $N$ whose prolongation to $J^p$, thought of as a section of $J^0$, satisfies (\ref{PDE}). 

The system of PDEs (\ref{PDE}) determines a region $\mathcal{E}\subset J^p$ where the functions $\Delta^\mu$, with $\mu\in \overline{1,s}$, vanish. The system of PDEs (\ref{PDE}) is hereupon assumed to be {\it locally solvable}, namely for each $j^p_xu\in \mathcal{E}$ there exists a solution $u(x)$ of the system (\ref{PDE}) such that $j^p_xu$ belongs to the prolongation of $u(x)$ to $J^p$ \cite[p. 158]{Ol93}. Additionally, it is also assumed that the system (\ref{PDE}) has {\it maximal rank}, i.e. the functions $\Delta^\mu$ are functionally independent and hence the space $\mathcal{E}$ can be considered as a submanifold of $J^p$ (cf. \cite[p. 158]{Ol93}).  These are assumptions satisfied by a large family of  differential equations.

A vector field $X$ on $J^0$ is a classical Lie point symmetry of the nondegenerate system of PDEs (\ref{PDE}) if the prolongation of $X$ to $J^p$ is such that
\begin{equation}\label{Sys}
j^pX\,\Delta^\mu|_\mathcal{E}=0, \qquad \mu\in \overline{1,s}.
\end{equation}
Therefore, the commutator of two classical Lie point symmetries is a Lie point symmetry. Thus, the Lie point symmetries form a Lie algebra $V_s$, which, if finite-dimensional, locally defines an action of a Lie group $G$ on $J^0$.

The symmetry group $G$ transforms solutions of (\ref{Sys}) into new solutions. This means that the graph corresponding to one solution is transformed into the graph associated with another solution. If the graph is preserved by the group $G$ or, equivalently, if the vector fields $X_a:=\xi_a^\alpha\partial_{\alpha}\alpha+\varphi^i_a\partial_i$, with $a\in \overline{1,r}$, conforming a basis of the Lie algebra $V_s$ are tangent to the graph, then the related solution is said to be {\it $G$-invariant}. Invariant solutions satisfy, in addition to the equations (\ref{PDE}), the characteristic equations equated to zero
\begin{equation}
\varphi^i_a(x,u)-\xi^\alpha_a(x,u)u^i_{,\alpha}=0,\qquad a\in \overline{1,r},
\end{equation}
where the index $a$ runs over the elements of a basis of $V_s$.

The above formalism can be extended to $J^\infty$ by extending the system (\ref{PDE}) on $J^p$ to a system of equations on $J^\infty$ of the form
\begin{equation}
\label{prol}
\Delta^\mu([u]),\qquad D_J\Delta^\mu([u])=0,\qquad  \mu\in \overline{1,s}, \quad\forall J.
\end{equation}
Recall that $\Delta^\mu$ only depends on the variables $u^J$ for $|J|\leq p$. Hence, the solutions of (\ref{prol}) are the extension to  $J^\infty$ of particular solutions of $\Delta^\mu(j^p_xu)=0$. The equations (\ref{prol}) define a submanifold $\mathcal{E}^{\infty}$ of $J^\infty$ and the total derivatives, $D_J$, are now tangent to it.

The {\it Cartan distribution} $\mathcal{C}$ on $J^\infty$ is the distribution generated by the vector fields $D_\alpha$ for $\alpha\in \overline{1,m}$. The Cartan distribution is involutive, i.e. the Lie bracket of vector fields taking values in  $\mathcal{C}$ also takes values in $\mathcal{C}$. The vector fields taking values in the Cartan distribution $\mathcal{C}$ are tangent to  $\mathcal{E}^\infty$, which allows us to restrict $\mathcal{C}$ to $\mathcal{E}^\infty$ giving rise to the distribution $\mathcal{C}|_{\mathcal{E}^\infty}$ on $\mathcal{E}^\infty$. The pair $(\mathcal{E}^\infty,\mathcal{C}|_{\mathcal{E}^\infty})$ is called a {\it diffiety} \cite{Vi01}.

It may happen that invariant solutions of (\ref{PDE}) are restricted in number or trivial if the full symmetry group is small. To extend the number of symmetries, and thus of solutions, one looks for generalized symmetries. They exist only if the nonlinear equation (\ref{PDE}) is {\it integrable} \cite{MSS91}, i.e. it has been obtained as the compatibility of a Lax pair. To describe them, we make use of generalized vector fields. A {\it generalized vector field} is a vector field $X_R$ on $J^\infty$ (\cite[Definition 5.1]{Ol93}) of the form
\begin{equation*}
X_R:=R^i([u])\frac{\partial}{\partial u^i},\label{1.12}
\end{equation*}
where the functions $R^i\in C^\infty(J^\infty)$ are arbitrary. 
The space of generalized vector fields is isomorphic to the space ${\rm sym}(\mathcal{C})$ of symmetries of the Cartan distribution $\mathcal{C}$ modulo {\it Cartan vector fields}, namely vector fields taking values in $\mathcal{C}$ \cite{KV99}. Indeed, every vector field in ${\rm sym}(\mathcal{C})$  gives rise to a generalized vector field by restricting it to $C^\infty(J^0)$ and every generalized vector field $X_R$ can be extended to a unique vector field ${\rm pr}\, X$ on $J^\infty$ leaving invariant the space of vector fields taking values in $\mathcal{C}$, i.e. ${\rm pr}\, X$ induces an element of ${\rm sym}(\mathcal{C})$, whose action on $C^\infty(J^0)$ coincides with $X_R$. In coordinates, 
\begin{equation}
{\rm pr}X_R=X_{R}+\sum_{|J|>0}D_JR^i\frac{\partial}{\partial u_J^i}.
\end{equation}
A vector field $X_R$ is a {\it generalized symmetry} of the system of PDEs (\ref{PDE}) if and only if ${\rm pr}X_R$ is tangent to $\mathcal{E}^\infty$ \cite{Ol93}.

\section{Cohomologies on parametrized $\mathfrak{g}$-valued differential forms}
This section addresses the generalizations of standard cohomologies of differential forms, e.g. the de Rham one, to the realm of cohomologies for differential forms taking values in Lie algebras parametrized by sections of a vector bundle $(M\times N,M,\pi)$ and a parameter. This entails the development of analogues for these new cohomologies of the Poincar\'e Lemma for de Rham cohomology. These results will later play a fundamental role in studying and extending Lax pairs and immersion formulas for soliton surfaces within Lie algebras to immersion formulas for soliton submanifolds in Lie algebras. 

Let ${\rm d}$ stand for the standard exterior differential on differential forms. 
The space $\Omega(M)\otimes \mathfrak{g}$ of 
$\mathfrak{g}$-valued differential forms on $M$ admits a natural exterior differential ${\rm \bf d}:\Omega(M)\otimes \mathfrak{g}\rightarrow \Omega(M)\otimes \mathfrak{g}$ defined by setting ${\rm \bf d}(\theta\otimes v):=({\rm d}\theta)\otimes v$ for every $v\in \mathfrak{g}$ and $\theta\in \Omega(M)$ and extending ${\rm \bf d}$ by $\mathbb{R}$-linearity over the whole $\Omega(M)\otimes \mathfrak{g}$. Due to the definition of ${\bf d}$ in terms of ${\rm d}$, it follows that ${\rm \bf d}^2=0$ and ${\rm \bf d}$ induces a cochain complex:
$$
C^\infty(M)\otimes \mathfrak{g}\stackrel{\rm \bf d}{\longrightarrow} \Omega^1(M)\otimes \mathfrak{g}\stackrel{\rm \bf d}{\longrightarrow}\Omega^2(M)\otimes\mathfrak{g}\stackrel{\rm \bf d}{\longrightarrow}\ldots 
$$
where $\Omega^k(M)\otimes\mathfrak{g}$, with $k\in \mathbb{Z}$, is the space of $\mathfrak{g}$-valued differential $k$-forms on $M$.  

Similarly to the case of de Rham cohomology, $\theta\in \Omega(M)\otimes\mathfrak{g}$ is said to be {\rm\bf d}-{\it closed} if ${\rm \bf d}\theta=0$ and {\rm\bf d}-{\it  exact} when $\theta={\rm \bf d}\vartheta$ for a $\vartheta\in \Omega(M)\otimes \mathfrak{g}$. Let $Z_{\rm dR}^k(M,\mathfrak{g})$ and $B_{\rm dR}^k(M,\mathfrak{g})$ be the spaces of {\rm \bf d}-closed and {\rm \bf d}-exact  $\mathfrak{g}$-valued differential $k$-forms, respectively. From ${\rm \bf d}^2=0$ it follows that $B_{\rm dR}^k(M,\mathfrak{g})\subset Z^k_{\rm dR}(M,\mathfrak{g})$, and it makes sense to define 
$$
{\rm H}_{\rm dR}^k(M,\mathfrak{g}):=\frac{Z_{\rm dR}^k(M,\mathfrak{g})}{B_{\rm dR}^k(M,\mathfrak{g})},\qquad k\in \overline{\mathbb{N}}.
$$
Since a $\mathfrak{g}$-valued differential form on $M$ is nothing but a family of $\dim \mathfrak{g}$  mutually independent differential forms on $M$, the Poincar\'e Lemma for standard differential forms can be straightforwardly extended to ${\bf d}$-closed $\mathfrak{g}$-valued differential forms on $M$. Moreover, ${\rm H}_{\rm dR}^k(M,\mathfrak{g})\simeq \bigoplus_{j=1}^{\dim\mathfrak{g}}{\rm H}_{\rm dR}^k(M)$ for $k\in \overline{\mathbb{N}}$.

Let us define a new cohomology on $\Omega(M)\otimes\mathfrak{g}$ by using the bracket $[\cdot\wedge\cdot]$ of $\mathfrak{g}$-valued differential forms on $M$ \cite{KN96}. This bracket is defined in such a way  that
$$
[\vartheta_1\otimes v_1\wedge\vartheta_2\otimes v_2]:=\vartheta_1\wedge \vartheta_2\otimes[v_1,v_2], \qquad \forall \vartheta_1\otimes v_1, \vartheta_2\otimes v_2\in \Omega(M)\otimes \mathfrak{g}, 
$$
and its value is extended by $C^\infty(M)$-bilinearity over the whole $\Omega(M)\otimes \mathfrak{g}$. This prompts us to define a new cochain complex on $\Omega(M)\otimes \mathfrak{g}$ given in the following theorem. 

\begin{theorem}\label{Coh1} Assume $\omega\in \Omega^1(M)\otimes \mathfrak{g}$ and define 
$$
{\rm \bf d}_\omega\vartheta:={\rm \bf d}\vartheta-\frac12[\omega\wedge\vartheta ],\qquad \forall\vartheta\in \Omega(M)\otimes \mathfrak{g} .
$$
If  ${\rm \bf d}_{\omega}\omega=0$, then ${\rm \bf d}_{2\omega}^2=0$ and ${\rm \bf d}_{2\omega}$ induces a cochain complex
\begin{equation}\label{chain}
C^\infty(M)\otimes \mathfrak{g}\stackrel{{\rm\bf d}_{2\omega}}{\longrightarrow} \Omega^1(M)\otimes \mathfrak{g}\stackrel{{\rm\bf d}_{2\omega}}{\longrightarrow}\Omega^2(M)\otimes\mathfrak{g}\stackrel{{\rm\bf d}_{2\omega}}{\longrightarrow}\ldots 
\end{equation}
\end{theorem}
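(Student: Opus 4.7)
The plan is to verify ${\bf d}_{2\omega}^2=0$ by direct computation on an arbitrary $\vartheta\in \Omega^k(M)\otimes\mathfrak{g}$, using three structural facts about the bracket $[\cdot\wedge\cdot]$: the graded Leibniz rule for ${\bf d}$, the graded Jacobi identity for $\mathfrak{g}$-valued forms, and the Maurer--Cartan-type hypothesis ${\bf d}_{\omega}\omega=0$, which rewrites as ${\bf d}\omega=\tfrac12[\omega\wedge\omega]$. The cochain complex assertion is then automatic, since each summand in the definition of ${\bf d}_{2\omega}$ shifts form degree by one (both ${\bf d}$ and $[\omega\wedge\cdot]$ do, the latter because $\omega$ is a 1-form), so (\ref{chain}) is a sequence of maps $\Omega^k(M)\otimes\mathfrak{g}\to\Omega^{k+1}(M)\otimes\mathfrak{g}$ whose squares vanish.

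First I would unwind the scaled definition as ${\bf d}_{2\omega}\vartheta={\bf d}\vartheta-[\omega\wedge\vartheta]$ and apply it twice. Using ${\bf d}^2=0$, this yields
$$
{\bf d}_{2\omega}^2\vartheta = -{\bf d}[\omega\wedge\vartheta]-[\omega\wedge{\bf d}\vartheta]+[\omega\wedge[\omega\wedge\vartheta]].
$$
Next I would invoke the graded Leibniz rule ${\bf d}[\omega\wedge\vartheta]=[{\bf d}\omega\wedge\vartheta]-[\omega\wedge{\bf d}\vartheta]$, the sign being $-1$ since $|\omega|=1$; this collapses the first two summands into the single term $-[{\bf d}\omega\wedge\vartheta]$. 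The graded Jacobi identity for $\mathfrak{g}$-valued forms, applied with both outer slots filled by the same $\omega$, reduces to
$$
[\omega\wedge[\omega\wedge\vartheta]]=\tfrac12[[\omega\wedge\omega]\wedge\vartheta],
$$
and substituting ${\bf d}\omega=\tfrac12[\omega\wedge\omega]$ into $-[{\bf d}\omega\wedge\vartheta]$ then makes the two remaining terms cancel.

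The main obstacle is bookkeeping rather than depth: one must track signs in the graded Leibniz and Jacobi identities, where $[\alpha\wedge\beta]=-(-1)^{|\alpha||\beta|}[\beta\wedge\alpha]$. The nontrivial step is the identity $[\omega\wedge[\omega\wedge\vartheta]]=\tfrac12[[\omega\wedge\omega]\wedge\vartheta]$, which fails for two distinct 1-forms but, because both entries coincide with $\omega$, the graded Jacobi identity yields $2[\omega\wedge[\omega\wedge\vartheta]]=[[\omega\wedge\omega]\wedge\vartheta]$. Once this is in hand, the cancellation is a one-line computation, and the differential ${\bf d}_{2\omega}$ assembles into the announced cochain complex.
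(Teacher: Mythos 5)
Your proposal is correct, and up to the final cancellation it is the same computation as the paper's: both expand ${\bf d}_{2\omega}^2\vartheta$, apply the graded Leibniz rule ${\bf d}[\omega\wedge\vartheta]=[{\bf d}\omega\wedge\vartheta]-[\omega\wedge{\bf d}\vartheta]$, and reduce to ${\bf d}_{2\omega}^2\vartheta=-[{\bf d}\omega\wedge\vartheta]+[\omega\wedge[\omega\wedge\vartheta]]$, which after substituting ${\bf d}\omega=\tfrac12[\omega\wedge\omega]$ leaves two terms to cancel. The arguments part ways only at that last step: you invoke the graded Jacobi identity intrinsically, which with both outer entries equal to the odd-degree form $\omega$ collapses to $2[\omega\wedge[\omega\wedge\vartheta]]=[[\omega\wedge\omega]\wedge\vartheta]$ and kills both terms at once, whereas the paper writes $\omega={\rm d}x^\alpha\otimes v_\alpha$ and $\vartheta={\rm d}x^J\otimes v_J$ in local coordinates, applies the ordinary Jacobi identity in $\mathfrak{g}$, and observes that the resulting coefficients are symmetric in $\alpha,\beta$ while ${\rm d}x^\alpha\wedge{\rm d}x^\beta$ is antisymmetric. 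These are the same underlying fact; your version is coordinate-free and shorter, while the paper's coordinate computation has the advantage of not presupposing the graded Jacobi identity for $\mathfrak{g}$-valued forms (it effectively reproves exactly the special case it needs). Your identity $2[\omega\wedge[\omega\wedge\vartheta]]=[[\omega\wedge\omega]\wedge\vartheta]$ is correct, and your remark that it is special to repeated odd-degree entries is precisely the right caveat.
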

\begin{proof} Assume that $\vartheta\in \Omega^k(M)\otimes \mathfrak{g}$ with $k\in \overline{\mathbb{N}}$. Since $\omega\in \Omega^1(M)\otimes \mathfrak{g}$ in view of the definition of ${\rm \bf d}_{2\omega}$, and the Lie bracket on $\mathfrak{g}$-valued differential forms, it follows that ${\rm\bf d}\vartheta-[\omega\wedge \vartheta]$ is a $\mathfrak{g}$-valued differential $(k+1)$-form. Hence,  the sequence of linear morphisms (\ref{chain}) is well defined.

Let us prove that ${\rm\bf d}^2_{2\omega}=0$. By evaluating ${\rm\bf d}^2_{2\omega}$ on $\vartheta$, we obtain
 \begin{equation}\label{d2t}
\begin{aligned}
{\rm \bf d}^2_{2\omega} \vartheta=-[{\bf d}\omega\wedge \vartheta]+[\omega \wedge {\bf d}\vartheta]-[\omega\wedge ({\bf d}\vartheta-[\omega\wedge \vartheta])]=-[{\bf d}\omega \wedge \vartheta]+[\omega \wedge [\omega\wedge \vartheta]].
\end{aligned}
\end{equation}
Since ${\rm\bf d}\omega=\frac 12[\omega\wedge\omega]$ by assumption, it follows that
\begin{equation}\label{3}
{\rm \bf d}^2_{2\omega} \vartheta=-\frac12[[\omega \wedge\omega]\wedge \vartheta]+[\omega \wedge [\omega\wedge \vartheta]].
\end{equation}
Let us set ${\rm d}x^J:={\rm d}x^{j_1}\wedge\ldots\wedge {\rm d}x^{j_{|J|}}$.
In local coordinates $\omega={\rm d}x^\alpha\otimes v_\alpha$ and $\vartheta={\rm d}x^J\otimes v_J$ for certain vectors $v_\alpha, v_J\in \mathfrak{g}$. Hence,
$$
{\rm\bf d}^2_{2\omega} \vartheta={\rm d}x^\alpha\wedge {\rm d}x^\beta\wedge {\rm d}x^J\otimes \frac12\left(-[[v_\alpha\wedge v_\beta ]\wedge v_J]+[v_\alpha\wedge [v_\beta \wedge  v_J]]+[v_\alpha \wedge [v_\beta\wedge v_J]]\right).
$$
The Jacobi identity for the Lie bracket in $\mathfrak{g}$ implies that
$$
{\rm\bf d}^2_{2\omega} \vartheta={\rm d}x^\alpha\wedge {\rm d}x^\beta\wedge {\rm d}x^J\otimes \frac12\left([v_\beta,[v_\alpha,\wedge v_J]+[v_\alpha,[v_\beta,v_J]]\right).
$$
Since the coefficients relative to the indices $\alpha,\beta,J$ of the above $\mathfrak{g}$-valued differential $(k+1)$-form are symmetric relative to the interchange of $\alpha$ and $\beta$, it follows that ${\rm \bf d}^2_{2\omega} \vartheta=0$. As this result remains true for any $\vartheta$ and any coordinated open subset of $M$, it turns out that ${\rm \bf d}^2_{2\omega}=0$.
\end{proof}

As in the de Rham cohomology induced by ${\bf d}$ on  $\mathfrak{g}$-valued differential forms, $\theta\in \Omega(M)\otimes\mathfrak{g}$ is said to be ${\rm \bf d}_{2\omega}$-{\it closed} if ${\rm \bf d}_{2\omega}\theta=0$ and ${\rm \bf d}_{2\omega}$-{\it exact} when $\theta={\rm \bf d}_{2\omega}\vartheta$ for a certain $\vartheta\in \Omega(M)\otimes \mathfrak{g}$. Let $Z^k_{2\omega}(M,\mathfrak{g})$ and $B^k_{2\omega}(M,\mathfrak{g})$ be the spaces of ${\rm \bf d}_{2\omega}$-closed and ${\rm \bf d}_{2\omega}$-exact $\mathfrak{g}$-valued differential $k$-forms, respectively. Since ${\rm \bf d}_{2\omega}^2=0$, it follows that $B^k_{2\omega}(M,\mathfrak{g})\subset Z_{2\omega}^k(M,\mathfrak{g})$ and we can define
$$
{\rm H}_{2\omega}^k(M,\mathfrak{g}):=\frac{Z^k_{2\omega}(M,\mathfrak{g})}{B_{2\omega}^k(M,\mathfrak{g})},\qquad k\in \overline{\mathbb{N}}.
$$
The Poincar\'e Lemma for standard differential forms can be straightforwardly extended to $\mathfrak{g}$-valued differential one-forms on $M$ relative to ${\bf d}_\omega$ and ${\bf d}_{2\omega}$. The first generalization is immediate and, in particular, ${\rm H}_{2\omega}^k(M,\mathfrak{g})\simeq \oplus_{j=1}^{\dim\mathfrak{g}}{\rm H}^k_{\rm dR}(M)$ for $k\in \overline{\mathbb{N}}$. Meanwhile,  a generalization of the Poincar\'e Lemma for the cohomology ${\bf d}_{2\omega}$ is given by the following theorem.

\begin{theorem}\label{CEWitten} Every ${\bf d}_{2\omega}$-closed differential one-form $\vartheta\in \Omega^1(M)\otimes \mathfrak{g}$ is locally ${\bf d}_{2\omega}$-exact, namely for every $p\in M$ there exists and open $U\ni p$ and $\theta\in C^\infty(U)\otimes\mathfrak{g}$ such that ${\bf d}_{2\omega}\theta=\vartheta $ on $U$. 
\end{theorem}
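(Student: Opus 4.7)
My plan is to reduce ${\bf d}_{2\omega}$ locally to the ordinary exterior derivative ${\bf d}$ by a gauge transformation built from the flatness of $\omega$, and then invoke the ordinary Poincar\'e Lemma for $\mathfrak{g}$-valued forms already noted just before the theorem statement.

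First I would exploit the hypothesis ${\bf d}_\omega\omega=0$, i.e.\ ${\bf d}\omega=\tfrac12[\omega\wedge\omega]$, which is exactly the Maurer--Cartan (zero-curvature) equation. By the classical integrability theorem for flat $\mathfrak{g}$-valued connections---the overdetermined linear system ${\bf d}\Phi=\omega\,\Phi$ has Frobenius integrability condition precisely the Maurer--Cartan equation---after shrinking to a contractible neighborhood $U$ of an arbitrary $p\in M$ there exists a smooth map $\Phi\colon U\to G$ into a Lie group $G$ with Lie algebra $\mathfrak{g}$ satisfying ${\bf d}\Phi=\omega\,\Phi$, equivalently $\omega=({\bf d}\Phi)\Phi^{-1}$.

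The second step is the gauge intertwining identity
$$
{\bf d}\bigl({\rm Ad}_{\Phi^{-1}}\vartheta\bigr)={\rm Ad}_{\Phi^{-1}}\bigl({\bf d}_{2\omega}\vartheta\bigr),\qquad \forall\,\vartheta\in\Omega(U)\otimes\mathfrak{g}.
$$
For a $k$-form $\vartheta$, expanding ${\bf d}(\Phi^{-1}\vartheta\,\Phi)$ via the graded Leibniz rule together with ${\bf d}\Phi^{-1}=-\Phi^{-1}({\bf d}\Phi)\Phi^{-1}$ produces precisely ${\rm Ad}_{\Phi^{-1}}({\bf d}\vartheta-[\omega\wedge\vartheta])$; this is a short direct check in any faithful matrix realization of $\mathfrak{g}$, and since the resulting identity is representation-independent it holds intrinsically. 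Granted this, the conclusion is immediate: if ${\bf d}_{2\omega}\vartheta=0$ for a one-form $\vartheta$, then $\sigma:={\rm Ad}_{\Phi^{-1}}\vartheta$ is ${\bf d}$-closed on the contractible $U$, so the ordinary Poincar\'e Lemma applied componentwise in a basis of $\mathfrak{g}$ yields $\eta\in C^\infty(U)\otimes\mathfrak{g}$ with $\sigma={\bf d}\eta$. Setting $\theta:={\rm Ad}_\Phi\eta\in C^\infty(U)\otimes\mathfrak{g}$ and applying the intertwining in reverse gives ${\bf d}_{2\omega}\theta={\rm Ad}_\Phi({\bf d}\eta)={\rm Ad}_\Phi\sigma=\vartheta$ on $U$.

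The main obstacle I expect is the first step, the local existence of $\Phi$. For a general Lie algebra $\mathfrak{g}$ not \emph{a priori} integrated to a Lie group, I would handle it either via a matrix representation (Ado's theorem when $\dim\mathfrak{g}<\infty$), or, more intrinsically, by constructing the primitive $\theta$ directly through a homotopy operator along rays emanating from $p$: the associated ``path-ordered exponential'' of $\omega$ is path-independent on a simply connected $U$ exactly because of the Maurer--Cartan hypothesis, producing the desired $\theta$ without any group-valued intermediate. Either route depends on the same ingredient, flatness of $\omega$, so both lead to the same conclusion.
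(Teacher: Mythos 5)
Your proof is correct, but it follows a genuinely different route from the paper's. The paper treats ${\bf d}_{2\omega}\theta=\vartheta$ directly as the overdetermined affine first-order system ${\bf d}\theta=[\omega\wedge\theta]+\vartheta$ in the unknown $\theta$ and computes its Frobenius (zero-curvature) integrability condition: substituting ${\bf d}\omega=\frac12[\omega\wedge\omega]$ and the equation itself into ${\bf d}([\omega\wedge\theta]+\vartheta)$, the $\theta$-dependent terms cancel by the graded Jacobi identity and what remains is exactly ${\bf d}_{2\omega}\vartheta$, so local solvability is equivalent to ${\bf d}_{2\omega}\vartheta=0$ in one stroke. You instead integrate the \emph{homogeneous} system ${\bf d}\Phi=\omega\,\Phi$ (whose integrability condition is the same Maurer--Cartan hypothesis), establish the conjugation identity ${\bf d}({\rm Ad}_{\Phi^{-1}}\vartheta)={\rm Ad}_{\Phi^{-1}}{\bf d}_{2\omega}\vartheta$, and reduce to the ordinary Poincar\'e Lemma. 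Both arguments ultimately rest on the same Frobenius-type fact, applied to different systems. What your route costs is the need for a group-valued (or faithful matrix) realization of $\mathfrak{g}$ --- a point you correctly flag and which the paper's direct computation avoids, though the paper itself restricts to matrix Lie algebras from Section 5 onward. What your route buys is more than the paper's: your intertwining identity is precisely the paper's later Lemma \ref{lem:closed}, and your overall scheme (conjugate by $\Phi$, apply the ordinary Poincar\'e Lemma, conjugate back) is exactly the mechanism of Lemma \ref{Sim} and Theorem \ref{MT}. In effect you have proved Theorem \ref{CEWitten} by anticipating the machinery the paper develops afterwards, which makes the logical architecture of Sections 6--7 visible already at this stage; the one detail worth making explicit is that your Leibniz-rule computation of ${\bf d}(\Phi^{-1}\vartheta\,\Phi)$ uses the sign convention $[\omega\wedge\vartheta]=\omega\wedge\vartheta-(-1)^k\vartheta\wedge\omega$ for a $k$-form $\vartheta$, which is indeed the one induced by the paper's definition $[\vartheta_1\otimes v_1\wedge\vartheta_2\otimes v_2]=\vartheta_1\wedge\vartheta_2\otimes[v_1,v_2]$.
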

\begin{proof}	
	The condition ${\rm \bf d}_{2\omega}\theta=\vartheta$, with $\theta\in C^\infty(M)\otimes \mathfrak{g}$, amounts to
$$
{\rm \bf d}\theta=[\omega\wedge \theta]+\vartheta.
$$
By using the zero-curvature condition (ZCC) for this system of first-order PDEs in the unknown $\theta$, it follows that the system has a local solution if and only if ${\rm \bf d}([\omega\wedge \theta]+\vartheta)=0$. Since ${\rm \bf d}_{\omega}\omega=0$, it follows that 
$$
{\rm \bf d}([\omega\wedge\theta]+\vartheta)=[{\rm \bf d}\omega\wedge \theta]-[\omega\wedge{\bf d}\theta]+{\rm \bf d}\vartheta=\frac 12[[\omega\wedge \omega]\wedge\theta]-[\omega\wedge([\omega \wedge \theta]+\vartheta)]+{\rm\bf d}\vartheta={\bf d}_{2\omega}\vartheta.
$$
Hence, ${ \bf d}_{2\omega}\theta=\vartheta$ for a certain locally defined $\theta$  if and only if ${\bf d}_{2\omega}\vartheta=0$.
\end{proof}

It is relevant that the ${\bf d}$-closedness of certain elements of $\Omega(M)\otimes\mathfrak{g}$ is related to the existence of solutions to certain systems of partial differential equations. For instance, if ${\bf d}_\omega\omega=0$, then
$$
\forall p\in M, \exists U\ni p,\exists F\in C^\infty(U)\otimes \mathfrak{g},\,\, {\bf d}F=\Upsilon\,\,\Longleftrightarrow \,\, \Upsilon\in Z_{\rm dR}^1(M,\mathfrak{g}),
$$
$$
\forall p\in M, \exists U\ni p,\exists g\in C^\infty(U)\otimes G,\,\, {\bf d}g=R_g\omega,\,\,\omega\in C^\infty(U)\otimes\mathfrak{g}\,\,\Longleftrightarrow \,\, \omega\in Z_{2\omega}^1(M,\mathfrak{g}),
$$
$$
\forall p\in M, \exists U\ni p,\exists F\in C^\infty(U)\otimes \mathfrak{g},\,\, {\bf d}F-[\omega,F]=\Upsilon\,\,\Longleftrightarrow \,\, \Upsilon\in Z_{2\omega}^1(M,\mathfrak{g}).
$$
The first and third cases are immediate. The second one follows from the fact that $G$ and $\mathfrak{g}$ are matrix Lie groups and Lie algebras, respectively, and ${\bf d}$ acts on them as a de Rham differential on their matrix entries.

The above results can be generalized to the hereafter called parametrized  $\mathfrak{g}$-valued differential forms. 
These differential forms will appear naturally in the study of Lax pairs and immersion formulas. We define $\mathcal{N}:=J^p\times \Lambda$, where $\Lambda$ is a one-dimensional submanifold of $\mathbb{C}$  coordinated by the variable $\lambda$.  

\begin{definition} A {\it parametrized $\mathfrak{g}$-valued differential form} is a family of $\mathfrak{g}$-valued differential forms on $M$, say
\begin{equation}\label{form}
\chi(u(x),\lambda)=\chi_J(u(x),\lambda){\rm d}x^J,\qquad \chi_J(u(x),\lambda)\in C^\infty(M),
\end{equation}
parametrized by arbitrary sections $u\in \Gamma(\pi)$ and $\lambda\in \Lambda$. A  parametrized $\mathfrak{g}$-valued differential form {\it on-shell} is a family of $\mathfrak{g}$-valued differential forms  (\ref{form}) where  $u\in \Gamma(\pi)$ is a solution of $\Delta(j^p_xu)=0$. We write $\Omega_\mathcal{N}(M)\otimes \mathfrak{g}$  (resp. $\Omega_\mathcal{S}(M)\otimes \mathfrak{g}$) for the space of  parametrized  $\mathfrak{g}$-valued differential forms (resp. on-shell).
\end{definition}

The space $\Omega_\mathcal{N}(M)\otimes \mathfrak{g}$ admits a standard grading $\Omega_\mathcal{N}(M)\otimes \mathfrak{g}=\bigoplus_{k\in \mathbb{Z}}\Omega^k_\mathcal{N}(M)\otimes \mathfrak{g}$ compatible with the exterior product of parametrized $\mathfrak{g}$-valued differential forms that is defined in the natural way. A similar grading can be applied to parametrized $\mathfrak{g}$-valued differential equations on-shell.
The elements of each $\Omega^k_\mathcal{N}(M)\otimes \mathfrak{g}$ (resp.  $\Omega^k_\mathcal{S}(M)\otimes \mathfrak{g}$) are called  {\it parametrized $\mathfrak{g}$-valued differential $k$-forms} (resp. {\it on-shell}). 

 The following theorem allows us to define two cohomologies on $\Omega_\mathcal{S}(M)\otimes \mathfrak{g}$. It is indeed a natural generalization of Theorem \ref{Coh1} to $\Omega_{\mathcal{N}}(M)\otimes\mathfrak{g}$ and $\Omega_{\mathcal{S}}(M)\otimes\mathfrak{g}$.
 
\begin{theorem}\label{Poincare1} Let $\omega\in \Omega^1_{\mathcal{N}}(M)\otimes \mathfrak{g}$ be such that $
	{\rm \bf d}_\omega\omega=0
	$ only on particular solutions of (\ref{PDE}). Let ${\rm \bf d}$ and ${\rm \bf d}_{2\omega}$ be the linear operators on $\Omega_\mathcal{N}(M)\otimes \mathfrak{g}$ acting on elements $\vartheta=\vartheta_J{\rm d}x^J$ in the form
\begin{equation}\label{ex1}
{\rm \bf d}\vartheta:=(d_\alpha\vartheta_J){\rm d}x^{\alpha}\wedge {\rm d}x^J,\qquad {\rm \bf d}_{2\omega}\vartheta:={\rm \bf d}\vartheta-[\omega\wedge\vartheta],
\end{equation}
where
$
(d_\alpha\vartheta_J)(u(x),\lambda):=\partial_\alpha [\vartheta_J(u(x),\lambda) 
]$ and $\omega\in \Omega^1_\mathcal{N}(M)\otimes \mathfrak{g}$. The space of $\Omega_\mathcal{N}(M)\otimes \mathfrak{g}$ admits a cohomology induced by ${\rm \bf d}$. Meanwhile, the space of $\Omega_\mathcal{S}(M)\otimes \mathfrak{g}$ admits two cohomologies given by ${\rm \bf d}$ and ${\rm \bf d}_{2\omega}$.
\end{theorem}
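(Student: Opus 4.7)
The plan is to reduce the statement to Theorem \ref{Coh1} by treating a parametrized $\mathfrak{g}$-valued differential form as a standard $\mathfrak{g}$-valued form on $M$ for each choice of parameters $(u,\lambda)$, and then to show that the extra structure (parametrization by sections of $\pi$ and the spectral parameter $\lambda$, plus the on-shell restriction) interacts cleanly with ${\bf d}$ and ${\bf d}_{2\omega}$. Concretely, I would first verify that both operators are well-defined linear maps of degree $+1$ on the appropriate graded spaces. For ${\bf d}$, the operator $d_\alpha$ acts coefficientwise, differentiating $\vartheta_J(u(x),\lambda)$ with respect to $x^\alpha$ while keeping the parametrization intact; it therefore sends $\Omega^k_\mathcal{N}(M)\otimes\mathfrak{g}\to\Omega^{k+1}_\mathcal{N}(M)\otimes\mathfrak{g}$, and since it does not touch the defining property of being evaluated on solutions of (\ref{PDE}), it restricts to $\Omega^k_\mathcal{S}(M)\otimes\mathfrak{g}\to\Omega^{k+1}_\mathcal{S}(M)\otimes\mathfrak{g}$. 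For ${\bf d}_{2\omega}$ the same conclusions follow once we note that the bracket $[\omega\wedge\vartheta]$ is pointwise (both factors share the same parametrization), so the result is again a parametrized $\mathfrak{g}$-valued form and stays on-shell whenever $\vartheta$ does.

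The second step is to show ${\bf d}^2=0$ on $\Omega_\mathcal{N}(M)\otimes\mathfrak{g}$ (hence also on $\Omega_\mathcal{S}(M)\otimes\mathfrak{g}$). By $\mathbb{R}$-linearity it suffices to compute on a monomial $\vartheta=\vartheta_J\,{\rm d}x^J$, obtaining
\[
{\bf d}^2\vartheta=(d_\beta d_\alpha\vartheta_J)\,{\rm d}x^\beta\wedge{\rm d}x^\alpha\wedge{\rm d}x^J,
\]
which vanishes by Schwarz's theorem (the coefficient functions are smooth in $x$ through $u(x)$ and $\lambda$ is a parameter) combined with the antisymmetry of the wedge product. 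No on-shell condition enters here, as expected.

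The third and central step is to prove ${\bf d}_{2\omega}^2=0$ on $\Omega_\mathcal{S}(M)\otimes\mathfrak{g}$. I would repeat verbatim the calculation of Theorem \ref{Coh1}: expanding ${\bf d}_{2\omega}^2\vartheta$ gives
\[
{\bf d}_{2\omega}^2\vartheta=-[{\bf d}\omega\wedge\vartheta]+[\omega\wedge[\omega\wedge\vartheta]],
\]
and then using the relation ${\bf d}\omega=\tfrac12[\omega\wedge\omega]$ (which is exactly ${\bf d}_\omega\omega=0$) together with the Jacobi identity for $\mathfrak{g}$, one shows, as in the proof of Theorem \ref{Coh1}, that this vanishes. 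The critical point is that the hypothesis only guarantees ${\bf d}_\omega\omega=0$ when the parameter $u$ is a solution of (\ref{PDE}); this is precisely the reason why the ${\bf d}_{2\omega}$-cochain complex lives on $\Omega_\mathcal{S}(M)\otimes\mathfrak{g}$ rather than on the larger space $\Omega_\mathcal{N}(M)\otimes\mathfrak{g}$. The main (and really the only) obstacle is conceptual rather than computational: one must confirm that the Theorem \ref{Coh1} argument, which was pointwise in $M$, transfers verbatim once we fix the parameters $(u,\lambda)$ with $u$ a solution, because then $\omega$ becomes an honest $\mathfrak{g}$-valued one-form on $M$ satisfying ${\bf d}_\omega\omega=0$. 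This parameter-by-parameter reduction makes the required $\mathcal{S}$-restriction natural and completes the proof.
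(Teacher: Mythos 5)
Your proposal is correct and follows essentially the same route as the paper: establish that ${\bf d}$ and ${\bf d}_{2\omega}$ raise degree appropriately, obtain ${\bf d}^2=0$ from the commutativity of the $d_\alpha$ together with the antisymmetry of the wedge product, and then transfer the computation of Theorem \ref{Coh1} (via the graded Leibniz rule for the bracket) to conclude ${\bf d}_{2\omega}^2=0$ on the on-shell space, where ${\bf d}_\omega\omega=0$ holds. Your explicit remark that the hypothesis forces the ${\bf d}_{2\omega}$-complex to live on $\Omega_\mathcal{S}(M)\otimes\mathfrak{g}$ rather than $\Omega_\mathcal{N}(M)\otimes\mathfrak{g}$ makes precise a point the paper leaves implicit.
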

\begin{proof} In view of (\ref{ex1}), the differential ${\rm \bf d}$ of a parametrized  $\mathfrak{g}$-valued differential $k$-form is a parametrized  $\mathfrak{g}$-valued differential $(k+1)$-form. Additionally, the commutativity of partial derivatives $\partial_\alpha,\partial_\beta$, for $\alpha,\beta\in\overline{1,m}$, and its action on the differential functions $\vartheta_J(u(x),\lambda)$ for each fixed $u(x)$ implies that $[d_\alpha,d_\beta]=0$. Hence,
$$
{\rm \bf d}^2\vartheta=(d_\beta d_\alpha\vartheta_J)(u(x),\lambda){\rm d}x^\beta\wedge {\rm d}x^\alpha\wedge dx^J=0
$$
because the coefficients of ${\bf d}^2\vartheta$, i.e. $d_\beta d_\alpha\vartheta_J$, are symmetric relative to the interchange of $\alpha$ and $\beta$. Additionally,
$$
{\rm \bf d} [\omega_1\wedge \omega_2]=[{\bf d} \omega_1\wedge \omega_2]+(-1)^s[\omega_1\wedge {\bf d} \omega_2],\quad\forall \omega_1\in \Omega_\mathcal{N}^s(M)\otimes \mathfrak{g},\quad \forall\omega_2\in \Omega_\mathcal{N}(M)\otimes \mathfrak{g}.
$$
Following the same ideas of Theorem \ref{Coh1} and using the previous fact, we see  that ${\rm \bf d}_{2\omega}$ induces a cohomology on $\Omega_\mathcal{S}(M)\otimes \mathfrak{g}$.
\end{proof}

The above theorem denotes the different differentials for the parametrized  $\mathfrak{g}$-valued differential forms in the same way as the differentials for standard $\mathfrak{g}$-valued differential forms. This simplifies the notation and it is not misleading since the meaning of ${\bf d}$ and ${\bf  d}_{2\omega}$ is clear taking into account on which kind of $\mathfrak{g}$-valued differential form the operator is acting on. Moreover, both operators are essentially the same thing, but they operate in different spaces. Similarly to previous cohomologies, one can define closeness and exactness relative to the operators ${\bf d}$ and ${\bf d}_\omega$. The spaces $Z^k_{\mathcal{S}}(M,\mathfrak{g})$, $B^k_{\mathcal{S}}(M,\mathfrak{g})$, and ${\rm H}^k_{\mathcal{S}}(M,\mathfrak{g})$ of closed, open, and equivalence classes of parametrized $\mathfrak{g}$-valued differential forms on-shell modulo exact ones can be defined as for previous cohomologies. Similarly, one can define the corresponding spaces for parametrized $\mathfrak{g}$-valued differential forms. Moreover, applying Theorem \ref{CEWitten} for each fixed $u(x)$ and $\lambda$, we get the following corollary.

\begin{corollary}\label{DoPragi} Every ${\bf d}_{2\omega}$-closed element of $\Omega^1_\mathcal{S}(M)\otimes \mathfrak{g}$ is locally ${\bf d}_{2\omega}$-exact.
\end{corollary}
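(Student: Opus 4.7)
The plan is to reduce the parametrized statement to the unparametrized Theorem \ref{CEWitten} by freezing the parameters $u(x)\in \Gamma(\pi)$ (restricted to solutions of (\ref{PDE})) and $\lambda \in \Lambda$ one at a time. Given $\vartheta \in \Omega^1_{\mathcal{S}}(M)\otimes \mathfrak{g}$ with ${\bf d}_{2\omega}\vartheta=0$, for each fixed solution $u$ and each fixed $\lambda$ the form $\vartheta(u(x),\lambda)$ is an ordinary ${\bf d}_{2\omega(u(x),\lambda)}$-closed $\mathfrak{g}$-valued one-form on $M$, with $\omega(u(x),\lambda)$ satisfying the usual zero-curvature hypothesis ${\bf d}_\omega \omega=0$ because on-shell this is precisely what the hypothesis of Theorem \ref{Poincare1} guarantees.

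First I would fix an arbitrary point $p\in M$ and choose a contractible coordinated open neighbourhood $U\ni p$. For each value of the parameters $(u,\lambda)$, Theorem \ref{CEWitten} produces a local $\theta(u,\lambda) \in C^\infty(U)\otimes \mathfrak{g}$ with ${\bf d}_{2\omega}\theta(u,\lambda)=\vartheta(u,\lambda)$ on $U$. To turn this pointwise-in-parameters statement into a bona fide parametrized $\mathfrak{g}$-valued zero-form on-shell, I would invoke the constructive part of the Poincaré-type argument: the primitive $\theta$ is obtained as the solution of the compatible first-order PDE system ${\bf d}\theta - [\omega\wedge \theta]=\vartheta$ with, say, the initial condition $\theta(p)=0$. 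Standard results on smooth dependence of solutions of such linear systems on parameters imply that $\theta$ is a smooth function of $u(x)$ and $\lambda$ whenever $\omega$ and $\vartheta$ are; hence $\theta\in C^\infty_{\mathcal{S}}(U)\otimes \mathfrak{g}$.

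Alternatively, if one prefers a coordinate-free argument, one can mimic the classical proof of the Poincaré Lemma by integrating along rays from $p$ in a chart, using the explicit homotopy operator; this construction is manifestly polynomial in the coefficients of $\vartheta$ and involves integrals that preserve smooth dependence on external parameters. Either route immediately delivers $\theta \in C^\infty_{\mathcal{S}}(U)\otimes\mathfrak{g}$ with ${\bf d}_{2\omega}\theta=\vartheta$ on $U$.

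The only genuine subtlety is justifying that the local primitive supplied by Theorem \ref{CEWitten} can be chosen to depend smoothly on the parameters $(u,\lambda)$; this is where I expect to spend most care, since Theorem \ref{CEWitten} itself is purely existential and invokes the ZCC abstractly. Once smooth parametric dependence is established by either the ODE/PDE-with-parameters theorem or the explicit homotopy formula, the corollary follows verbatim from Theorem \ref{CEWitten} applied fibrewise.
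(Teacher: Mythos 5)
Your proposal is correct and follows essentially the same route as the paper, whose entire proof is the one-line observation that Theorem \ref{CEWitten} applies for each fixed $u(x)$ and $\lambda$. The extra care you devote to smooth dependence of the primitive on the parameters goes beyond what the paper addresses (its definition of a parametrized form only requires a family indexed by $u$ and $\lambda$), but it is a reasonable refinement rather than a different argument.
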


More easily, one has the following trivial result.

\begin{corollary}\label{DoPragi2} Every ${\bf d}$-closed element of $\Omega^k_\mathcal{N}(M)\otimes \mathfrak{g}$ or $\Omega^k_\mathcal{S}(M)\otimes\mathfrak{g}$ is locally ${\bf d}$-exact. 
\end{corollary}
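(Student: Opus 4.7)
The plan is to reduce the statement, for each choice of parameters $(u,\lambda)$, to the classical $\mathfrak{g}$-valued Poincar\'e Lemma and then check that the local primitive obtained in this way depends on $(u,\lambda)$ in exactly the same parametrized manner as the original form, so that the primitive itself lies in $\Omega^{k-1}_\mathcal{N}(M)\otimes\mathfrak{g}$ (resp.\ $\Omega^{k-1}_\mathcal{S}(M)\otimes\mathfrak{g}$).

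First I would fix an arbitrary point $p\in M$ and choose a coordinated open neighbourhood $U$ of $p$ whose image in the chart is star-shaped with respect to the image of $p$. Write $\vartheta=\vartheta_J(u(x),\lambda)\,{\rm d}x^J\in \Omega^k_\mathcal{N}(U)\otimes\mathfrak{g}$ and observe that, by the very definition of ${\bf d}$ in Theorem \ref{Poincare1}, for each fixed section $u\in\Gamma(\pi)$ and each fixed $\lambda\in\Lambda$ the form $\vartheta(u,\lambda)$ is a bona fide $\mathfrak{g}$-valued differential $k$-form on $U$ and ${\bf d}\vartheta=0$ translates into ${\rm d}\vartheta(u,\lambda)=0$ in the ordinary sense, componentwise in a basis of $\mathfrak{g}$.

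Next I would invoke the standard (star-shaped) Poincar\'e Lemma for $\mathbb{R}$-valued differential forms applied componentwise: on $U$ the homotopy operator $h$ produces a primitive
\[
\varphi(u,\lambda):= h\bigl(\vartheta(u,\lambda)\bigr),
\]
where, in local coordinates, $h$ is given by a fibrewise integral of the type $\int_0^1 t^{k-1}\,\iota_X\vartheta(u,\lambda)|_{tx}\,{\rm d}t$ and satisfies ${\rm d}\varphi(u,\lambda)=\vartheta(u,\lambda)$. Because $h$ is a $C^\infty(U)$-linear combination of partial evaluations and integrations that act only on the $x$-dependence of the coefficients, the coefficients of $\varphi$ remain smooth functions of $x$ built from the same data $(u(x),\lambda)$ as the coefficients of $\vartheta$, with the same differentiability hypotheses on $u$ and $\lambda$. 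Hence $\varphi\in \Omega^{k-1}_\mathcal{N}(U)\otimes\mathfrak{g}$ and ${\bf d}\varphi=\vartheta$ on $U$, giving local ${\bf d}$-exactness in the parametrized setting.

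For the on-shell case the argument is identical: if $\vartheta\in\Omega^k_\mathcal{S}(U)\otimes\mathfrak{g}$, its definition just restricts the class of sections $u$ to solutions of (\ref{PDE}), and the homotopy operator above produces a primitive whose coefficients are built in exactly the same way from $(u(x),\lambda)$, hence $\varphi\in\Omega^{k-1}_\mathcal{S}(U)\otimes\mathfrak{g}$. The only mild point to watch is regularity of $\varphi$ as a function of the parameters, which follows at once from standard differentiation under the integral sign applied to the homotopy formula; this is the nearest thing to a real obstacle, but it is immediate from the smoothness already built into the definition of parametrized $\mathfrak{g}$-valued differential forms.
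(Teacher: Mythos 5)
Your argument is correct and is exactly the route the paper intends: the paper states this corollary without proof as a ``trivial result,'' relying on the observation (made earlier in Section 3, and used verbatim for Corollary \ref{DoPragi}) that for each fixed $u$ and $\lambda$ a parametrized $\mathfrak{g}$-valued $k$-form is just an ordinary $\mathfrak{g}$-valued $k$-form on which ${\bf d}$ acts as the componentwise de Rham differential, so the classical Poincar\'e Lemma applies. Your additional check that the homotopy-operator primitive varies appropriately with the parameters $(u,\lambda)$ is a reasonable piece of diligence the paper omits, but it does not change the approach.
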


\section{Spectral differential forms}

It turns out that parametrized $\mathfrak{g}$-valued differential forms of the previous section can be generated, in relevant cases, by means of a new type of $\mathfrak{g}$-valued differential forms on spaces of infinite jets: the hereafter called spectral differential forms. Although the theoretical description of these new structures is more complicated than the description of parametrized $\mathfrak{g}$-valued differential forms, they offer numerous practical advantages in calculations, which justifies their introduction. In particular, this formalism permits us to use the powerful machinery of the theory of jet bundles to obtain Lie symmetries, immersion formulas, and other related structures. Relevantly, this formalism will fill some theoretical details in the jet formalism of the immersion formulas given in \cite{Grundland15}. 

\begin{definition} A {\it spectral differential form} is a family of $\mathfrak{g}$-valued differential forms on $J^\infty$ parametrized by a spectral parameter $\lambda\in\Lambda\subset \mathbb{C}$ for a certain submanifold $\Lambda\subset \mathbb{C}$ and taking the form
	\begin{equation}\label{NIF}
	{\bf \omega_\lambda}:=\omega_J([u],\lambda){\rm d}x^J,
	\end{equation}
	where $\omega_J([u],\lambda)$  is an arbitrary function in $C_\lambda^\infty(J^\infty):=\cup_{p\in \bar{\mathbb{N}}}C^\infty(J^p\times \Lambda)$. An {\it spectral differential form on-shell} is the restriction of a spectral differential form (\ref{NIF}) to a $\lambda$-parametrized family of $\mathfrak{g}$-valued differential forms on $\mathcal{E}^\infty$. We write $\Omega_\lambda(J^\infty)\otimes \mathfrak{g}$  (resp. $\Omega_\lambda(\mathcal{E}^\infty)\otimes \mathfrak{g}$) for the space of  spectral differential forms (resp. on-shell).
\end{definition}

The space $\Omega_\lambda(J^\infty)\otimes \mathfrak{g}$ admits a standard grading $\Omega_\lambda(J^\infty)\otimes \mathfrak{g}=\bigoplus_{k\in \mathbb{Z}}\Omega_\lambda^k(J^\infty)\otimes \mathfrak{g}$ compatible with the exterior product of spectral differential forms defined in the natural way. A similar grading can be applied to spectral differential forms on-shell.
The elements of each $\Omega^k_\lambda(J^\infty)\otimes \mathfrak{g}$ (resp.  $\Omega^k_\lambda(\mathcal{E}^\infty)\otimes \mathfrak{g}$) are called  {\it spectral differential $k$-forms} (resp. {\it on-shell}). 

Every {\it spectral differential form} induces a unique parametrized $\mathfrak{g}$-valued differential form. 
Indeed, given (\ref{NIF}) and a section ${s}\in \Gamma(\pi)$ of the form $s(x)=(x,u(x))$, a parametrized $\mathfrak{g}$-valued differential form  is defined by 
$
\omega(u(x),\lambda):=j^\infty s^*\omega_\lambda.
$
If $u(x)$ is a solution of our system of PDEs, then $\omega(u(x),\lambda)$ becomes a parametrized $\mathfrak{g}$-valued differential form on-shell. Meanwhile, a parametrized $\mathfrak{g}$-valued differential form need not come from a spectral differential form.

Since spectral differential forms give rise to parametrized $\mathfrak{g}$-valued differential forms, it becomes relevant how to apply the formalism for parametrized $\mathfrak{g}$-valued differential forms directly to spectral  differential forms. This is accomplished by the following theorem, which provides an extension of the horizontal differential on the infinite-dimensional jet bundle $J^\infty$ (see \cite{KV11}).

\begin{theorem}\label{Poincare1p} Let $\omega\in \Omega^1_\lambda(J^\infty)\otimes\mathfrak{g}$ be such that ${\bf d}_\omega\omega=0$ on $\mathcal{E}^\infty$. Let ${\rm \bf d}$ and ${\rm \bf d}_{2\omega}$ be the linear operators on $\Omega_\lambda(J^\infty)\otimes \mathfrak{g}$ acting on homogeneous elements $\vartheta=\vartheta_J{\rm d}x^J\in \Omega^k_\lambda(J^\infty)\otimes \mathfrak{g}$ given by
	\begin{equation}
	\label{def}
	{\rm \bf d}\vartheta:=(D_\alpha\vartheta_J){\rm d}x^{\alpha}\wedge {\rm d}x^J,\qquad {\rm \bf d}_{2\omega}\vartheta:={\rm \bf d}\vartheta-[\omega\wedge\vartheta].
	\end{equation}
	The space $\Omega_\lambda(J^\infty)\otimes\mathfrak{g}$ admits a cohomology given by ${\bf d}$. The space $\Omega_\lambda(\mathcal{E}^\infty)\otimes \mathfrak{g}$ admits two cohomologies given by ${\rm \bf d}$ and ${\rm \bf d}_{2\omega}$.
\end{theorem}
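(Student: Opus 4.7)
The plan is to mirror the proof of Theorem~\ref{Poincare1} almost verbatim, replacing the partial derivatives $d_\alpha$ on the base $M$ with the total derivatives $D_\alpha$ on $J^\infty$. Two statements must be checked: that ${\bf d}^2=0$ on all of $\Omega_\lambda(J^\infty)\otimes\mathfrak{g}$, and that ${\bf d}_{2\omega}^2=0$ after restricting to $\Omega_\lambda(\mathcal{E}^\infty)\otimes\mathfrak{g}$.

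First I would verify that the operator ${\bf d}$ in (\ref{def}) is well-defined and raises the form degree by one: since $\vartheta_J\in C_\lambda^\infty(J^\infty)$ and each $D_\alpha$ preserves this space, ${\bf d}\vartheta\in\Omega^{k+1}_\lambda(J^\infty)\otimes\mathfrak{g}$. Then ${\bf d}^2=0$ follows from the defining property of the Cartan distribution on $J^\infty$, namely that the total derivatives commute, $[D_\alpha,D_\beta]=0$; the coefficients $D_\beta D_\alpha\vartheta_J$ are symmetric in $\alpha,\beta$ and thus annihilated by the antisymmetric ${\rm d}x^\beta\wedge {\rm d}x^\alpha$, exactly as in the proof of Theorem~\ref{Poincare1}. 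This gives the cohomology on $\Omega_\lambda(J^\infty)\otimes \mathfrak{g}$. Because the $D_\alpha$ are tangent to $\mathcal{E}^\infty$ (a standard diffiety property recalled in Section~2), ${\bf d}$ restricts cleanly to $\Omega_\lambda(\mathcal{E}^\infty)\otimes\mathfrak{g}$, yielding one of the two on-shell cohomologies.

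For the nilpotency of ${\bf d}_{2\omega}$, I would first record the graded Leibniz rule
\begin{equation*}
{\bf d}[\omega_1\wedge\omega_2]=[{\bf d}\omega_1\wedge\omega_2]+(-1)^s[\omega_1\wedge{\bf d}\omega_2],\qquad \omega_1\in\Omega^s_\lambda(J^\infty)\otimes\mathfrak{g},
\end{equation*}
which is a direct consequence of the Leibniz rule for each $D_\alpha$ combined with the bilinearity of $[\cdot\wedge\cdot]$. Applying this together with the definition (\ref{def}), one computes, exactly as in (\ref{d2t}),
\begin{equation*}
{\bf d}_{2\omega}^2\vartheta = -[{\bf d}\omega\wedge\vartheta] + [\omega\wedge[\omega\wedge\vartheta]].
\end{equation*}
The hypothesis, which on $\mathcal{E}^\infty$ reads ${\bf d}\omega=\tfrac{1}{2}[\omega\wedge\omega]$, together with the graded Jacobi identity for $[\cdot\wedge\cdot]$ inherited from the Jacobi identity on $\mathfrak{g}$, makes the right-hand side vanish on $\mathcal{E}^\infty$ by the symmetric–antisymmetric cancellation in local coordinates carried out in the proof of Theorem~\ref{Coh1}. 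This yields ${\bf d}_{2\omega}^2=0$ on $\Omega_\lambda(\mathcal{E}^\infty)\otimes\mathfrak{g}$.

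The main subtlety, and essentially the only one, is the on-shell restriction: the hypothesis ${\bf d}_\omega\omega=0$ is imposed only on $\mathcal{E}^\infty$, so the cancellation above is valid solely after restricting everything to $\mathcal{E}^\infty$. Off-shell the identity ${\bf d}_{2\omega}^2=0$ generally fails, which is precisely why the second cohomology lives on $\Omega_\lambda(\mathcal{E}^\infty)\otimes\mathfrak{g}$ rather than on the full $\Omega_\lambda(J^\infty)\otimes\mathfrak{g}$. One should also verify that $\omega|_{\mathcal{E}^\infty}$ and the bracket $[\omega\wedge\vartheta]$ descend consistently to $\mathcal{E}^\infty$, but this is immediate since $\mathcal{E}^\infty$ is a submanifold of $J^\infty$ and pullback commutes with the exterior product and with the $\mathfrak{g}$-valued bracket.
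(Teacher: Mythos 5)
Your proposal is correct and follows essentially the same route as the paper's proof: nilpotency of ${\bf d}$ from $[D_\alpha,D_\beta]=0$, restriction to $\mathcal{E}^\infty$ via tangency of the total derivatives, the graded Leibniz rule for the bracket, and then the computation of Theorem \ref{Coh1} (which the paper invokes by reference and you spell out) to get ${\bf d}_{2\omega}^2=0$ on-shell. Your explicit remark that the cancellation only holds on $\mathcal{E}^\infty$ because the hypothesis ${\bf d}_\omega\omega=0$ is imposed there is a welcome clarification of a point the paper leaves implicit.
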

\begin{proof} The formula (\ref{def}) ensures that the differential ${\rm \bf d}$ of a spectral differential $k$-form on $J^\infty$ is a spectral differential $(k+1)$-form. Since $[D_\alpha,D_\beta]=0$ on $J^\infty$ (cf. \cite{Ol93}),
	$$
	{\rm \bf d}^2\vartheta=(D_\beta D_\alpha\vartheta_J){\rm d}x^\beta\wedge {\rm d}x^\alpha\wedge dx^J=0
	$$
	because the coefficients of this spectral differential  $(k+2)$-form, namely $D_\beta D_\alpha\vartheta_J$, are symmetric relative to the interchange of $\alpha$ and $\beta$. Hence, ${\bf d}$ gives rise to a cohomology in $\Omega_\lambda(J^\infty)\otimes \mathfrak{g}$. Meanwhile,
	$$
	{\rm \bf d} [\omega_1\wedge \omega_2]=[{\bf d} \omega_1\wedge \omega_2]+(-1)^s[\omega_1\wedge {\bf d} \omega_2],\quad\forall \omega_1\in \Omega_\lambda^s(J^\infty)\otimes \mathfrak{g},\quad \forall\omega_2\in \Omega_\lambda(J^\infty)\otimes \mathfrak{g}.
	$$
	
  The vector fields $D_\alpha$ are tangent to $\mathcal{E}^\infty$, which allows us to define the action of $D_\alpha$ on $C_\lambda^\infty(\mathcal{E}^\infty)$ and to restrict  ${\bf d}$  and ${\bf d}_{2\omega}$ to spectral differential forms on-shell. Moreover, since $[D_\alpha,D_\beta]=0$ on $\mathcal{E}^\infty$, it follows that ${\bf d}^2=0$, which gives rise to a differential and a cochain simplex on  $\Omega_\lambda(\mathcal{E}^\infty)\otimes\mathfrak{g}$. This along with the definition (\ref{def}) allows us to apply the ideas of the proof in Theorem \ref{Coh1} so as to prove that ${\bf d}_{2\omega}$ gives rise to a cohomology in $\Omega_\lambda(\mathcal{E}^\infty)\otimes \mathfrak{g}$. 
\end{proof}

Let $H_{\rm dR}^k(J^\infty,\mathfrak{g})$ and $H_{\rm dR}^k(\mathcal{E}^\infty,\mathfrak{g})$ be the cohomologic groups induced by ${\bf d}$ on $\Omega_\lambda(J^\infty)\otimes\mathfrak{g}$ and $\Omega_\lambda(\mathcal{E}^\infty)\otimes\mathfrak{g}$. The proof of Theorem \ref{Poincare1p} and the form of ${\bf d}$ show that $H_{\rm dR}^{k}(J^\infty,\mathfrak{g})\simeq \bigoplus_{j=1}^{\dim \mathfrak{g}}H_{\rm dR}^k(J^\infty)$. Since $M\times N$ is a strong deformation retract of $J^\infty$, then $H_{\rm dR}^k(J^\infty)\simeq H_{\rm dR}^k(M\times N)$ (see \cite{GMS00} for details).

It is worth commenting on the space on which the spectral differential form $\omega$ vanishes. The LSP for a system of PDEs gives rise to a parametrized $\mathfrak{g}$-valued differential form $\omega$ such that ${\bf d}_\omega\omega=0$ for every $\lambda\in \Lambda$ only on particular solutions of our system of PDEs.  In practical applications $\omega$ is then described by a spectral differential one-form, $\omega_\lambda$, vanishing for every $\lambda\in \Lambda$ on those points of $J^\infty$ projecting onto $\mathcal{E}$. Then, the submanifold $\mathcal{E}^\infty$ is contained in this space and ${\bf d}_{\omega_\lambda}\omega_\lambda=0$ on $\mathcal{E}^\infty$.

	Let us prove the following Poincar\'e Lemma-type result, which is an extension of the Poincar\'e Lemma for the cohomology of horizontal forms on diffieties and infinite-dimensional jet bundles $J^\infty$ \cite{KV99}.
	
	\begin{theorem} Let $J^0$ and $\mathcal{E}^\infty$ be homotopic to $\mathbb{R}^m$. Then every ${\bf d}$-closed spectral differential form or spectral differential form on-shell is exact. Every ${\bf d}_{2\omega}$-closed spectral differential one-form on-shell is locally exact.
		\end{theorem}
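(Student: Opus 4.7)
The plan is to reduce both claims to the scalar, $\lambda$-parametrized horizontal Poincar\'e lemma on $J^\infty$ (respectively $\mathcal{E}^\infty$), invoking the $\mathbb{R}^m$-homotopy hypothesis to trivialize the relevant de Rham cohomology, and then to imitate the proof of Theorem \ref{CEWitten} for the ${\bf d}_{2\omega}$-part.

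For the first statement, I would fix a basis $\{e_a\}_{a=1}^{\dim \mathfrak{g}}$ of $\mathfrak{g}$ and decompose any ${\bf d}$-closed spectral form as $\vartheta = \vartheta^a \otimes e_a$, reducing the problem to a finite family of scalar, $\lambda$-parametrized horizontal forms with ${\bf d}\vartheta^a = 0$. Because ${\bf d}$ is built from the total derivatives $D_\alpha$, it is precisely the horizontal exterior differential on the diffieties $(J^\infty,\mathcal{C})$ and $(\mathcal{E}^\infty,\mathcal{C}|_{\mathcal{E}^\infty})$. Invoking the Vinogradov one-line theorem, together with the strong deformation retract of $M \times N$ into $J^\infty$ already used in the paper, the horizontal cohomology of $J^\infty$ agrees with $H^*_{\rm dR}(M\times N)$ and that of $\mathcal{E}^\infty$ with the de Rham cohomology of $\mathcal{E}^\infty$ itself. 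Both vanish in positive degree by the homotopy hypothesis, so each $\vartheta^a$ is globally ${\bf d}$-exact. Smooth dependence of the primitive on $\lambda$ is automatic because a standard Poincar\'e homotopy operator is $C^\infty(\Lambda)$-linear and commutes with $\partial_\lambda$.

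For the second statement, I would mimic the proof of Theorem \ref{CEWitten} on $\mathcal{E}^\infty$. Given ${\bf d}_{2\omega}$-closed $\vartheta \in \Omega^1_\lambda(\mathcal{E}^\infty)\otimes\mathfrak{g}$, the equation ${\bf d}_{2\omega}\theta = \vartheta$ reads ${\bf d}\theta = [\omega \wedge \theta] + \vartheta$, a horizontal first-order linear system in $\theta \in C^\infty_\lambda(\mathcal{E}^\infty)\otimes\mathfrak{g}$. Its Frobenius/zero-curvature compatibility condition is the ${\bf d}$-closedness of the right-hand side, which by the Leibniz rule for $[\cdot\wedge\cdot]$, the assumption ${\bf d}_\omega\omega=0$ on $\mathcal{E}^\infty$, and the Jacobi-identity cancellation already displayed in the proof of Theorem \ref{Coh1}, reduces to ${\bf d}_{2\omega}\vartheta = 0$. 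Since this holds by hypothesis, the horizontal Frobenius construction (applied componentwise in the basis $\{e_a\}$ and underpinned by the first part of the theorem used locally) supplies a local primitive $\theta$.

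The main obstacle is the infinite-dimensional nature of $J^\infty$ and $\mathcal{E}^\infty$: one must check that the primitive produced by the homotopy operator genuinely lies in $C^\infty_\lambda(J^\infty) = \bigcup_p C^\infty(J^p \times \Lambda)$, i.e.\ depends on only finitely many jet variables, and remains smooth in $\lambda$. This is controlled by the definition of spectral forms: each coefficient $\vartheta^a_J$ factors through some finite $J^p$, so the homotopy operator and the Frobenius integration can be performed on the finite-dimensional model $J^p$ (or on $\mathcal{E}^\infty \cap J^p$) and then pushed along the inductive limit without enlarging the jet order of the primitive beyond what the data already requires.
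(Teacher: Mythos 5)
Your proposal is correct and follows essentially the same route as the paper: the ${\bf d}$-part is handled by identifying ${\bf d}$ componentwise with the horizontal differential of the variational complex on $J^\infty$ (respectively of the diffiety $(\mathcal{E}^\infty,\mathcal{C}|_{\mathcal{E}^\infty})$) and invoking the horizontal Poincar\'e lemma together with the retraction onto $J^0$ and the $\mathbb{R}^m$-homotopy hypothesis, while the ${\bf d}_{2\omega}$-part is obtained by rewriting ${\bf d}_{2\omega}\theta=\vartheta$ as the first-order linear system ${\bf d}\theta=[\omega\wedge\theta]+\vartheta$ whose compatibility condition is exactly ${\bf d}_{2\omega}\vartheta=0$. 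The only divergence is cosmetic: where you appeal to a ``horizontal Frobenius construction'' (and helpfully make explicit the finite-jet-order bookkeeping the paper leaves implicit), the paper instead justifies local solvability on the infinite-dimensional $\mathcal{E}^\infty$ by citing formal integrability of analytic systems in the sense of Kruglikov--Lychagin.
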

	\begin{proof} The differential operators ${\bf d}$ and ${\bf d}_{2\omega}$  induce cochain complexes with respect to the gradings $\Omega_\lambda(J^\infty)\otimes \mathfrak{g}\simeq\oplus_{k\in \mathbb{Z}} \Omega^k_\lambda(J^\infty)\otimes \mathfrak{g}$ and $\Omega_\lambda(\mathcal{E}^\infty)\otimes \mathfrak{g}\simeq\oplus_{k\in \mathbb{Z}} \Omega^k_\lambda(\mathcal{E}^\infty)\otimes \mathfrak{g}$, i.e. we have the cochain complexes

\begin{equation}\label{chain2}
C_\lambda^\infty(J^\infty)\otimes \mathfrak{g}\stackrel{{\rm\bf d}}{\longrightarrow} \Omega_\lambda^1(J^\infty)\otimes \mathfrak{g}\stackrel{{\rm\bf d}}{\longrightarrow}\Omega_\lambda^2(J^\infty)\otimes\mathfrak{g}\stackrel{{\rm\bf d}}{\longrightarrow}\ldots 
\end{equation}	
\begin{equation}\label{chain3}
	C_\lambda^\infty(\mathcal{E}^\infty)\otimes \mathfrak{g}\stackrel{{\rm\bf d},{\rm\bf d}_{2\omega}}{\longrightarrow} \Omega_\lambda^1(\mathcal{E}^\infty)\otimes \mathfrak{g}\stackrel{{\rm\bf d},{\rm\bf d}_{2\omega}}{\longrightarrow}\Omega_\lambda^2(\mathcal{E}^\infty)\otimes\mathfrak{g}\stackrel{{\rm\bf d},{\rm\bf d}_{2\omega}}{\longrightarrow}\ldots 
\end{equation}	
For every fixed $\lambda$, the space $\Omega^k_\lambda(J^\infty)\otimes\mathfrak{g}$ is isomorphic to $\dim\mathfrak{g}$-copies of the so-called {\it horizontal space} 
$$
\bar{\Lambda}^k(J^\infty):=\{\vartheta:=\vartheta_J([u]){\rm d}x^J:[u]\in J^\infty,\vartheta_J([u])\in \mathfrak{g}\}$$ of  horizontal differential forms on $J^\infty$. Moreover, these spaces give rise to a cochain complex for the so-called {\it horizontal differential} $d_h$ of the {\it variational complex} (see \cite[Section 5.4]{Ol93}), which coincides with ${\bf d}$ when $\dim\mathfrak{g}=1$. Additionally ${\bf d}={\rm d}_h\oplus\ldots\oplus {\rm d}_h$ ($\dim\mathfrak{g}$ times). If a horizontal form is closed relative to $d_h$, then it is locally exact (cf. \cite[Theorem 5.82]{Ol93}). Hence,  a ${\bf d}$-closed spectral differential form will be always locally  ${\bf d}$-exact. 

The previous result can be extended to spectral differential forms on-shell by using the diffiety $(\mathcal{E}^\infty,\mathcal{C}|_{ \mathcal{E}^\infty})$. In fact, it is useful to note that, for a trivial differential equation $\mathcal{E}=J^p$, one has that $(\mathcal{E}^\infty,\mathcal{C}|_{\mathcal{E}^\infty})=(J^\infty,\mathcal{C})$ and the following commentaries retrieve the results of the previous paragraph. The space of spectral differential forms on-shell for a particular $\lambda$ is isomorphic to $\dim \mathfrak{g}$-copies of the space of horizontal forms of the diffiety, $\Lambda_h(\mathcal{E}^\infty)$, and the restriction of the differential ${\bf d}$ to $\Omega_\lambda(\mathcal{E}^\infty)\otimes\mathfrak{g}$ is indeed the horizontal differential on the diffiety $\mathcal{E}^\infty$ (see \cite{Vi01} for details on this and following commentaries). As every closed horizontal differential form is locally closed relative to the differential of the diffiety, it follows that every ${\bf d}$-closed form on $\mathcal{E}^\infty$ is locally exact. Since $\mathcal{E}^\infty$ is homotopic to $\mathbb{R}^m$, every ${\bf d}$-closed spectral differential form on-shell is exact.
			
		The proof for the local exactness of ${\bf d}_{2\omega}$-closed spectral differential one-forms follows from writing the partial differential equations $\Upsilon={\bf d}_{2\omega}F$, for $\Upsilon\in \Omega^1_\lambda(\mathcal{E}^\infty)\otimes\mathfrak{g}$ in terms of ${\bf d}$. The obtained differential equation in $F$ is integrable if the system of PDEs on $\mathcal{E}^\infty$ is analytic and formally integrable, which amounts to the fact that ${\bf d}_{2\omega}\Upsilon=0$ (cf. \cite{KL08}). Hence, a local solution $F$ can be obtained and $\Upsilon$ becomes locally exact.
\end{proof}

Previous theorems denote the differentials for the parametrized  spectral differential forms in the same way as those for standard parametrized and standard $\mathfrak{g}$-valued differential forms. This simplifies the notation and it does not lead to a mistake: the meaning of ${\rm \bf d}$ and ${\rm\bf  d}_{2\omega}$ is clear taking into account which kind of $\mathfrak{g}$-valued differential form the operator is acting on. Moreover, both operators are essentially the same thing, but they operate in different spaces.

\section{LSPs and $\mathfrak{g}$-valued differential forms}
To analyse immersion formulas for soliton surfaces in Lie algebras and to generalize them to immersed soliton submanifolds in Lie algebras, we will  describe integrability conditions in terms of cohomological techniques. 

Let us consider an integrable (in the sense of having a LSP) system of PDEs on $J^p$  of the form
\begin{equation}
\Delta(j^p_xu)=0,\label{2.1}
\end{equation}
for a function $\Delta:J^p\rightarrow \mathbb{R}^s$. As in Section \ref{ClaLie}, the above system is assumed to have maximal rank and to be locally solvable. Recall that the maximal rank assumption implies that the zeros of $\Delta$ give rise to a submanifold $\mathcal{E}\subset J^p$. 

Let $G$ be a matrix Lie group and let $\mathfrak{g}$ be its matrix Lie algebra.
Consider that the LSP problem related to (\ref{2.1}) is given by a $(\lambda,u(x))$-parametrized family of systems of linear PDEs 
\begin{equation}\label{LSP1}
\partial_\alpha\Phi=U_\alpha({ u}(x),\lambda)\Phi,\qquad  \alpha\in \overline{1,m},\quad \Phi\in G,\quad\,U_\alpha(u(x),\lambda)\in \mathfrak{g},\quad \lambda \in \Lambda\subset \mathbb{C},
\end{equation}
whose integrability condition, the {\it Zero-Curvature Condition} (ZCC), i.e. 
\begin{equation}\label{sur1}
\partial_\beta U_\alpha-\partial_\alpha U_\beta+[U_\alpha,U_\beta]=0,\qquad \forall \lambda\in \Lambda,\qquad \forall \alpha,\beta \in \overline{1,m}, 
\end{equation}
is satisfied if and only if $u(x)$ is a solution to (\ref{2.1}). Let us describe this result cohomologically. In this respect, we define a parametrized $\mathfrak{g}$-valued differential one-form $\omega({ u}(x),\lambda):=U_\alpha({ u}(x),\lambda){\rm d}x^\alpha$. 
 Then, the {\it Zero-Curvature Condition} (ZCC) for (\ref{2.1}) amounts to looking for the curves $u(x):M\rightarrow N$ ensuring that  
\begin{equation}\label{con1}
{\rm \bf d}_{\omega}{\omega}:={\rm \bf d}\omega-\frac12[\omega\wedge\omega]=0,\qquad \forall \lambda\in \Lambda.
\end{equation} 
This equivalence appears as a direct consequence of writing (\ref{con1}) in coordinates, namely
$$
0\!=\!\partial_\beta U_\alpha{\rm d}x^\beta\wedge {\rm d}x^\alpha -\frac {\left[U_\alpha {\rm d}x^\alpha\wedge U_\beta {\rm d}x^\beta\right]}2\!\!=\!\!\sum_{\alpha<\beta}\left(\partial_\beta U_\alpha\!-\!\partial_\alpha U_\beta  \!+\![U_\alpha,U_\beta] \right){\rm d}x^\beta\wedge {\rm d}x^\alpha,
$$
for all $\alpha,\beta\in \overline{1,m}$ and $\lambda\!\in\! \Lambda$.
Hence, the above expression vanishes for a certain $u(x)$ if and only if  (\ref{sur1}) holds.

\begin{example} The complex projective space $\mathbb{C}P^1$ can be understood via the {\it Hopf fibration} $h:S^3\rightarrow \mathbb{C}P^1\simeq S^3/S^1\simeq S^2$ as the space of orbits in the three-dimensional unit sphere $S^3\subset \mathbb{C}^2$ (relative to the canonical Hermitian product in $\mathbb{C}^2$) with respect to the natural action of $S^1$, understood as the Lie group of complex numbers with module one, on $S^3$ (see \cite{Ho31,Ur03} for details). Equivalently, $\mathbb{C}P^1$ amounts to the space of rank-one Hermitian projectors $P$ onto $\mathbb{C}^2$, namely $P^\dagger=P$, $P^2=P$ and ${\rm tr} P=1$ \cite{Ei78,GP78}. The solutions for the $\mathbb{C}P^1$ sigma model are given by mappings $P:S^2\rightarrow \mathbb{C}P^1$ where $S^2$ is understood as the compactification of $\mathbb{C}$ by gluing the infinity point  topologically. The differential equations for the $\mathbb{C}P^1$ sigma model are given by 
	\begin{equation}\label{CP2}
	[\partial_+\partial_-P(x,y),P(x,y)]=\frac14[\nabla^2 P,P]=0,\qquad  \nabla^2:=\partial_x^2+\partial_y^2,\quad \partial_\pm:=\frac12(\partial_x\pm {\rm i}\partial_y),
	\end{equation}
	where $[\cdot,\cdot]$ stands for the commutator of operators.
	Hence, the $\mathbb{C}P^1$ sigma model can be understood as a system of PDEs on the jet bundle $J^2$ relative to the projection onto the second factor $\pi:\mathbb{C}P^1\times S^2\rightarrow S^2$. This system admits a LSP problem with a purely imaginary spectral parameter $\lambda\in \mathbb{C}$. To clearly show the geometric properties of this LSP, it is convenient to introduce the new variable $\theta:={\rm i}(P-{\rm Id}_2/2)\in\mathfrak{su}(2)$. Then, the LSP reads
	$$\left\{\begin{aligned}
	\partial_x\Phi&=U_x(\theta(x,y),\lambda)\Phi\\\partial _y\Phi&=U_y(\theta(x,y),\lambda)\Phi\end{aligned}\right.,\qquad \Phi\in SU(2),\qquad \bar \lambda=-\lambda,
	$$
	where
	$$
	 \left\{\begin{aligned}
	 &U_x(\theta(x,y),\lambda):=\frac{-2}{1-\lambda^2}([\partial_x\theta(x,y),\theta(x,y)]-{\rm i}\lambda [\partial_y\theta(x,y),\theta(x,y)])\in \mathfrak{su}(2),\\
	 &U_y(\theta(x,y),\lambda):=\frac{-2}{1-\lambda^2}([\partial_x\theta(x,y),\theta(x,y)]{\rm i}\lambda+ [\partial_y\theta(x,y),\theta(x,y)])\in \mathfrak{su}(2).
	 \end{aligned}
	 \right.
	$$
	This gives rise to a real parametrized $\mathfrak{su}(2)$-valued differential one-form on $\mathbb{C}$ of the form 
	$$
	\omega_{CP}:=U_x(\theta(x,y),\lambda){\rm d}x+U_y(\theta(x,y),\lambda){\rm d} y.
	$$
	Observe that $\theta(x,y),\partial_x\theta(x,y),\partial_y\theta(x,y)$ are understood as particular curves in $\mathfrak{su}(2)$.
	\end{example}

The LSP of the $\mathbb{C}P^1$ model, its parametrized $\mathfrak{su}(2)$-valued differential one-form, and the corresponding system of PDEs can be understood as structures on a jet bundle. This motivates the following geometric approach to the LSP equations and related structures based upon jets and spectral differential forms taking values in $\mathfrak{su}(2)$.

The system of partial differential equations (\ref{2.1}) can be naturally extended to a manifold $\mathcal{E}^\infty\subset J^\infty$ by assuming $[u]\in J^\infty$ and adding the conditions $D_J[\nabla^2P,P]=0$ for an arbitrary multi-index $J$. This allows us to define a new submanifold $\mathcal{E}^\infty\subset J^\infty$ where all previous functions on $J^\infty$ vanishes. Then, the total derivatives $D_x,D_y$ become tangent to $\mathcal{E}^\infty$.  

Let us now provide a new formalism of the LSP problem in the language of jets. Essentially, this is a reinterpretation of  the algebraic approach given in \cite{FGFL00} in terms of functions on rings and an improvement of the method on spaces of jets detailed in \cite{Grundland15}.

Consider that the hereafter called {\it jet LSP} for the differential equation (\ref{2.1}) is a system of PDEs on $J^\infty$ of the form
\begin{equation}
D_\alpha\Phi([u],\lambda)=U_\alpha([u],\lambda)\Phi([u],\lambda),\qquad
\alpha\in \overline{1,m},\qquad \Phi([u],\lambda)\in G.\label{LSPn}
\end{equation}

The system of PDEs (\ref{LSPn}) must be involutive to be integrable, i.e. $D_\alpha D_\beta\Phi([u],\lambda)=D_\beta D_\alpha \Phi([u],\lambda)$. Nevertheless, this system of partial differential equations is such that the unknown is defined on an infinite-dimensional jet space $J^\infty$. Hence, it is not obvious that this condition is sufficient to ensure integrability. Nevertheless, it can be proved under quite general conditions, e.g. the system must be analytic \cite{KL08}, that the involutivity of the system ensures its integrability.

The following theorem enables us to extend the notion of integrability on finite-dimensional manifolds to $J^\infty$  and the diffiety $\mathcal{E}^\infty$. 
\begin{theorem} The differential equation (\ref{LSPn}) has a solution $\Phi([u],\lambda)$, with $[u]\in\mathcal{E}^\infty$, if and only if ${\bf d}_{\omega}\omega=0$ on $\mathcal{E}^\infty$. 
	\end{theorem}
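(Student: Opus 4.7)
The plan is to prove the two implications separately, treating the claim as the infinite-jet analogue of the usual Frobenius-type correspondence between the zero-curvature condition and integrability of a linear system.

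For the direction ``solution $\Rightarrow$ ZCC'', I would take an arbitrary solution $\Phi([u],\lambda)\in G$ of (\ref{LSPn}) and differentiate the defining relation using total derivatives. Applying $D_\beta$ to $D_\alpha\Phi=U_\alpha\Phi$ and using (\ref{LSPn}) a second time yields
\begin{equation*}
D_\beta D_\alpha\Phi=(D_\beta U_\alpha)\Phi+U_\alpha U_\beta\Phi,
\end{equation*}
and symmetrically for $D_\alpha D_\beta\Phi$. Since $[D_\alpha,D_\beta]=0$ on $J^\infty$, hence on $\mathcal{E}^\infty$, subtraction gives
\begin{equation*}
\bigl(D_\beta U_\alpha-D_\alpha U_\beta+[U_\alpha,U_\beta]\bigr)\Phi=0.
\end{equation*}
Invertibility of $\Phi\in G$ forces the bracketed expression to vanish on $\mathcal{E}^\infty$ for all $\alpha,\beta\in\overline{1,m}$ and all $\lambda\in\Lambda$. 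Re-expressing in the language of spectral differential one-forms exactly as in the coordinate computation already performed just after (\ref{con1}), this is precisely ${\bf d}_\omega\omega=0$ on $\mathcal{E}^\infty$.

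For the direction ``ZCC $\Rightarrow$ solution'', the argument is the genuinely delicate one, because we cannot simply invoke the finite-dimensional Frobenius theorem: the unknown $\Phi$ depends on infinitely many variables $x,u_J$ and $\lambda$, so $\mathcal{E}^\infty$ is an infinite-dimensional diffiety. I would regard (\ref{LSPn}) as a first-order quasilinear system on the diffiety $(\mathcal{E}^\infty,\mathcal{C}|_{\mathcal{E}^\infty})$, whose compatibility (involutivity) condition, obtained by equating $D_\alpha D_\beta\Phi$ and $D_\beta D_\alpha\Phi$ on formal solutions, is exactly the vanishing of ${\bf d}_\omega\omega$ on $\mathcal{E}^\infty$ by the same calculation as above. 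Under the standing hypotheses on $\Delta$ (maximal rank and local solvability, and analyticity as invoked earlier in the paper), the Kuranishi-type extension of Cartan--Kähler to formally integrable analytic systems on diffieties, as cited in \cite{KL08}, then guarantees the existence of a local solution $\Phi([u],\lambda)\in G$ of (\ref{LSPn}) on $\mathcal{E}^\infty$.

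The main obstacle is precisely this second direction: one must be careful that ``involutivity implies integrability'' is not automatic for systems whose variables range over $J^\infty$ or over a diffiety, and the conclusion really relies on the analyticity and formal integrability assumptions under which the cited result of \cite{KL08} applies. Once this integrability result is quoted, the equivalence with the cohomological condition ${\bf d}_\omega\omega=0$ on $\mathcal{E}^\infty$ is immediate from the coordinate identification between the ZCC and the bracket computation $D_\beta U_\alpha-D_\alpha U_\beta+[U_\alpha,U_\beta]=0$.
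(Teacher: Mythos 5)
Your proposal is correct and follows essentially the same route as the paper: the direct implication via commutativity of the total derivatives $D_\alpha$, $D_\beta$ and invertibility of $\Phi\in G$ yielding $D_\beta U_\alpha-D_\alpha U_\beta+[U_\alpha,U_\beta]=0$ on $\mathcal{E}^\infty$, and the converse by appealing to involutivity plus analyticity of the system on the diffiety as in \cite{KL08}. You spell out the differentiation step more explicitly than the paper does, but the argument is the same.
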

\begin{proof} Let us prove the direct part. If (\ref{LSPn}) has a solution on $\mathcal{E}^\infty$, then
	$$
D_\beta D_\alpha\Phi=D_\alpha D_\beta\Phi,\quad \forall \lambda\in \Lambda, \forall \alpha,\beta\in \overline{1,m} \quad \Longrightarrow \quad D_\beta U_\alpha-D_\alpha U_\beta+[U_\alpha,U_\beta]=0,\quad \forall \lambda \in \Lambda,
	$$
	on $\mathcal{E}^\infty$. Then,
	\begin{equation}\label{eqp}
	{\bf d}_{\omega}\omega=\sum_{\alpha<\beta}(D_\beta U_\alpha-D_\alpha U_\beta+[U_\alpha,U_\beta]){\rm d}x^\alpha\wedge {\rm d}x^\beta=0,\qquad \forall \lambda\in \Lambda
	\end{equation}
	on $\mathcal{E}^\infty$, whereupon the direct part follows.  The converse is immediate by taking into account that involutivity and analiticity of the system (\ref{LSPn}) on $\mathcal{E}^\infty$ gives rise to a solution (cf. \cite{KL08}).
	
\end{proof}

It is worth noting that the pull-back by an arbitrary section $s(x):=(x,u(x))$ of $J^0$ of the structures related to spectral differential forms permit us to recover the formalism for parametrized $\mathfrak{g}$-valued differential forms. In particular, $j^\infty s^* {\bf d}_\omega\omega=0$ retrieves (\ref{sur1}). Moreover, $j^\infty s^* \Phi([u],\lambda)$ provides a solution for the pull-back of the system (\ref{LSP1}), namely
$$
j^\infty s^* D_\alpha \Phi([u],\lambda)=j^\infty s^*[U([u],\lambda)\Phi([u],\lambda)]\Longleftrightarrow \partial_\alpha \Phi(u(x),\lambda)=U_\alpha(u(x),\lambda)\Phi(u(x),\lambda).
$$
In other words, the infinite-dimensional jet approach and spectral differential forms represent a way of representing the standard formalism for LSP when the structures appearing can be defined on infinite-dimensional jet manifolds.

\begin{example}  The $\mathbb{C}P^1$ sigma model can be naturally considered as a differential equation on $J^2$ of the form
	\begin{equation}\label{LSPinf}
	[D^2_x\theta+D_y^2\theta,\theta]=0.
	\end{equation}
	This gives as a submanifold $\mathcal{E}_{sm}\subset J^2$. If the additional conditions are assumed
	$$
	D_x^pD^q_y[D^2_x\theta+D_y^2\theta,\theta]=0,\qquad \forall p,q\in \overline{\mathbb{N}},
	$$
	then a submanifold $\mathcal{E}_{sm}^\infty\subset J^\infty$ is obtained and the derivatives $D_x,D_y$ are tangent to it. To describe (\ref{LSPinf}), a LSP on $J^\infty$ is introduced:
	$$\left\{\begin{aligned}
D_x\Phi([\theta],\lambda)&=U_x([\theta],\lambda)\Phi([\theta],\lambda)\\D _y\Phi([\theta],\lambda)&=U_y([\theta],\lambda)\Phi([\theta],\lambda)\end{aligned}\right.,\qquad \Phi([\theta],\lambda)\in SU(2),\qquad \bar \lambda=-\lambda,
$$
where
$$
\left\{\begin{aligned}
&U_x([\theta],\lambda):=\frac{-2}{1-\lambda^2}([D_x\theta,\theta]-{\rm i}\lambda [D_y\theta,\theta])\in \mathfrak{su}(2),\\
&U_y([\theta],\lambda):=\frac{-2}{1-\lambda^2}([D_x\theta,\theta]{\rm i}\lambda+ [D_y\theta,\theta])\in \mathfrak{su}(2).
\end{aligned}
\right.
$$
The corresponding spectral differential one-form on $J^\infty$ is given by
	$$
\omega([\theta],\lambda):=U_x([\theta],\lambda){\rm d}x+U_y([\theta],\lambda){\rm d} y.
$$
Hence
$$
\begin{aligned}
{\bf d}_\omega\omega&=D_\beta U_\alpha\otimes {\rm d}x^\beta\wedge {\rm d}x^\alpha -\frac12\left[(U_\alpha {\rm d}x^\alpha)\wedge (U_\alpha{\rm d}x^\alpha)\right]\\
&=\left(D_y U_x-D_x U_y  +[U_x,U_y] \right){\rm d}y\wedge {\rm d}x\\
&=[D_x^2+D_y^2)\theta,\theta]{\rm d}y\wedge {\rm d}x.
\end{aligned}
$$
Composing this with the infinite-prolongation $j^\infty s$ of a section $s(x,y):=(x,y,\theta(x,y))$ related to an arbitrary $\theta(x,y)$, we obtain that $j^\infty s^*{\bf d}_\omega\omega=0$ amounts to 
$$
\begin{aligned}
0&=j^\infty s^*{\bf d}_\omega\omega\\
&=(\partial_y U_x-\partial_x U_y  +[U_x,U_y]){\rm d}y\wedge {\rm d}x\\
&=(\partial_y U_x(\theta(t),\lambda)-\partial_x U_y(\theta(t),\lambda)+[U_x(\theta(t),\lambda),U_y(\theta(t),\lambda)]){\rm d}y\wedge {\rm d}x.
\end{aligned}
$$
	\end{example}

Finally, let us consider an assumption that will be useful in the following sections and that appears in different practical applications of immersion formulas in the literature.

Subsequently, we will assume that there exists a particular on-shell solution  $\Phi([u],\lambda)$ to (\ref{LSP1}) satisfying the so-called {\it on-shell condition}
$$
\lim_{\lambda \rightarrow \infty}\Phi([u],\lambda)={\rm Id}_N\in SU(2),\qquad \forall [u]\in \mathcal{E}_{sm}^\infty,
$$
where ${\rm Id}_N$ is an $N\times N$ identity matrix. 
\begin{example} A solution to the jet LSP for the $\mathbb{C}P^1$ sigma model can be written (cf. \cite{GLM16}) as 
	$$
\Phi([\theta],\lambda)={\rm Id}_2+\left(\frac{4\lambda}{(1-\lambda)^2}-\frac{2}{1-\lambda}\right)(-{\rm i}\theta+{\rm Id}_2/2)=\frac{\lambda(\lambda+1)}{(\lambda-1)^2}{\rm Id}_2-i\frac{6\lambda-2}{(1-\lambda)^2}\theta\in SU(2),
$$
for $[u]\in \mathcal{E}^\infty$, $\lambda:={\rm i}t$, and $t\in \mathbb{R}.$
	\end{example}

\section{Immersion formulas}
Now we are interested in looking for a simultaneous infinitesimal deformation of the LSP and the zero curvature condition. This will be done in a geometric manner, which will recover the case of immersed soliton surfaces as a particular instance, and it will enable us to generalize the procedure to any immersed submanifold in Lie algebras. Our formalism will use spectral differential forms, which is very practical in applications. The formalism for parametrized $\mathfrak{g}$-valued differential forms can be easily obtained by the hint given by considering the pull-back of given expressions for particular sections $u\in \Gamma(\pi)$. 

 Subsequently, the dependence of all structures on the corresponding variables will be omitted to simplify the notation. We recall that $G$ and $\mathfrak{g}$ are assumed to be finite-dimensional matrix Lie groups and Lie algebras respectively. 
 
Consider a deformation
$$
\omega_\epsilon:=\omega+\epsilon \Upsilon,\qquad  \Phi_\epsilon:=R_{\exp(\epsilon F)}\Phi, \qquad \forall\epsilon\in\mathbb{R},\forall \lambda\in\Lambda,
$$
where $\omega,\Upsilon\in \Omega^1_\lambda(\mathcal{E}^\infty)\otimes \mathfrak{g}$, $\Phi\in C_\lambda^\infty(\mathcal{E}^\infty)\otimes G $ and $F\in C_\lambda^\infty(\mathcal{E}^\infty)\otimes\mathfrak{g}$ satisfy the conditions 
\begin{equation}\label{GenCon}
{\rm \bf d}_{\omega_\epsilon}\omega_\epsilon=0,\qquad R_{{\Phi}^{-1}_{\epsilon}}{\bf d}{\Phi}_\epsilon=\omega_\epsilon, \qquad \forall\epsilon\in\mathbb{R},\forall \lambda\in\Lambda.
\end{equation}
Although the above spectral differential forms may admit extensions to $\Omega_\lambda(J^\infty)\otimes\mathfrak{g}$, e.g. the above $\omega$ is given, in applications, by the restriction to $\mathcal{E}^\infty$ of the spectral differential form related to a LSP, we will be mainly concerned with their values on-shell. 
The first condition in (\ref{GenCon}) implies that
$$
{\rm \bf d}_{ \omega_\epsilon}
\omega_\epsilon={\rm \bf d}\omega_\epsilon-\frac 12[\omega_\epsilon\wedge\omega_\epsilon]=0,\qquad \forall \epsilon \in \mathbb{R},\forall \lambda\in \Lambda.
$$
The latter is satisfied if and only if  ${\rm \bf d}_{\omega}\omega=0$ on-shell and one has that
$$
0\!=\!\frac{\rm d}{{\rm d}\epsilon }\bigg|_{\epsilon=0}\!\!\!\left({\rm \bf d}\omega_\epsilon\!-\!\frac 12[\omega_\epsilon\!\wedge\!\omega_\epsilon]\right)\!=\!{\rm\bf d}\Upsilon-[\omega\wedge \Upsilon]\!=\!{\rm \bf d}_{2\omega}\Upsilon,
$$
where 
$\Upsilon:=A_\alpha{\rm d}x^\alpha
$ in coordinates. Under previous conditions, it can be proved that ${\bf d}F={\rm Ad}_{\Phi^{-1}}\Upsilon$ for a certain $F\in C_\lambda^\infty(\mathcal{E}^\infty)\otimes \mathfrak{g}$ (cf. \cite{Grundland15,GLM16}). The main task now is to provide methods to obtain $F$ from a certain ${\bf d}_{2\omega}$-closed $\Upsilon$.

Since our deformation does not transform $\lambda$, the singularity structure of the $\omega_\epsilon$ in the spectral parameter $\lambda$ remains untouched. 
Each $\omega$ and $\Upsilon$ satisfying the above equations generate a submanifold immersed in the Lie algebra $\mathfrak{g}$ as explained next in this section. It is worth noting that our geometric approach allows us to simplify previous proofs in the literature (see for instance \cite{FG96,Grundland15,GLM16}). Moreover, our proof is general for any arbitrary $M$ and deals not only with $M=\mathbb{R}^2$ as is standard in the literature \cite{Ci97,Ci07,DS92,FG96,Grundland15,GLM16}.

We then say that $\Phi$ is an {\it on-shell spectral solution of the LSP problem} when $\Phi([u],\lambda)$ is a particular solution of (\ref{LSPn}) for every $\lambda$ and every $[u]\in\mathcal{E}^\infty$. We may assume that $\Phi$ is extended to other $[u]$ out of $\mathcal{E}^\infty$ but in that case we cannot ensure that $\Phi([u],\lambda)$ will be a solution of (\ref{LSPn}) for all values of $\lambda\in \Lambda$. In such a case, we say that $\Phi([u],\lambda)$ is an {\it off-shell} spectral solution of (\ref{PDE}).

\begin{lemma}\label{lem:closed} Let $\Phi$ be an on-shell solution of the jet LSP (\ref{LSPn}) and let $\omega$ be the associated element in $\Omega_\lambda^1(J^\infty)\otimes\mathfrak{g}$ satisfying ${\bf d}_{\omega}\omega=0$ for every $\lambda\in \Lambda$ on-shell. Then, 
\begin{equation}\label{Exp}
{\rm \bf d}{\rm Ad}_{\Phi^{-1}}\vartheta={\rm Ad}_{\Phi^{-1}}{\rm \bf d}_{2\omega}\vartheta,\qquad \forall \vartheta\in \Omega_\lambda(\mathcal{E}^\infty)\otimes \mathfrak{g}.
\end{equation}
\end{lemma}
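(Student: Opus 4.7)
The plan is to treat $G$ and $\mathfrak{g}$ as matrix objects, so that $\mathrm{Ad}_{\Phi^{-1}}\vartheta$ becomes the matrix-product sandwich $\Phi^{-1}\vartheta\Phi$, where $\Phi^{-1}$ and $\Phi$ are matrix-valued $0$-forms and $\vartheta$ is a matrix-valued spectral differential $k$-form. The jet LSP $D_\alpha\Phi=U_\alpha\Phi$ then translates, after multiplying by $\mathrm{d}x^\alpha$ and summing, into the single identity $\mathbf{d}\Phi=\omega\Phi$ in $\Omega_\lambda(\mathcal{E}^\infty)$. Differentiating $\Phi\Phi^{-1}=\mathrm{Id}_N$ with the graded Leibniz rule yields the companion identity $\mathbf{d}\Phi^{-1}=-\Phi^{-1}(\mathbf{d}\Phi)\Phi^{-1}=-\Phi^{-1}\omega$, which is where the hypothesis that $\Phi$ is \emph{on-shell} enters: both relations only hold on $\mathcal{E}^\infty$.

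Next I would apply the graded Leibniz rule $\mathbf{d}(\alpha\beta)=(\mathbf{d}\alpha)\beta+(-1)^{|\alpha|}\alpha(\mathbf{d}\beta)$ twice to $\Phi^{-1}\vartheta\Phi$. Since $\Phi^{-1}$ has degree $0$ and $\Phi^{-1}\vartheta$ has degree $k$, this gives
\begin{equation*}
\mathbf{d}(\Phi^{-1}\vartheta\Phi)=(\mathbf{d}\Phi^{-1})\,\vartheta\Phi+\Phi^{-1}(\mathbf{d}\vartheta)\Phi+(-1)^{k}\Phi^{-1}\vartheta\,(\mathbf{d}\Phi)=-\Phi^{-1}\omega\vartheta\Phi+\Phi^{-1}(\mathbf{d}\vartheta)\Phi+(-1)^{k}\Phi^{-1}\vartheta\omega\Phi.
\end{equation*}
The remaining step is to recognize the combination $-\Phi^{-1}\omega\vartheta\Phi+(-1)^k\Phi^{-1}\vartheta\omega\Phi$ as $-\Phi^{-1}[\omega\wedge\vartheta]\Phi$. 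This rests on the elementary identity for the graded bracket of matrix $\mathfrak{g}$-valued forms, namely $[\omega\wedge\vartheta]=\omega\vartheta-(-1)^k\vartheta\omega$ when $\omega$ has degree $1$ and $\vartheta$ has degree $k$; one verifies it by expanding $\omega=\omega^a\otimes e_a$, $\vartheta=\vartheta^b\otimes e_b$, using $[e_a,e_b]=e_ae_b-e_be_a$, and commuting a $1$-form past a $k$-form at the cost of $(-1)^k$. Inserting this identity collapses the three matrix terms into the compact form $\Phi^{-1}(\mathbf{d}\vartheta-[\omega\wedge\vartheta])\Phi=\mathrm{Ad}_{\Phi^{-1}}\mathbf{d}_{2\omega}\vartheta$, which is \eqref{Exp}.

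Since every $\vartheta\in\Omega_\lambda(\mathcal{E}^\infty)\otimes\mathfrak{g}$ is a locally finite sum of homogeneous forms $\vartheta_J([u],\lambda)\,\mathrm{d}x^J$ and both sides of \eqref{Exp} are $C^\infty_\lambda(\mathcal{E}^\infty)$-linear, it is enough to verify the identity on a single homogeneous summand, which the calculation above does. The only real obstacle is sign bookkeeping: one must apply the correct Koszul sign in the graded Leibniz rule on matrix-valued forms, and one must use the matching sign convention in the definition of the graded bracket $[\omega\wedge\vartheta]$ so that the two cancellations line up. The hypothesis ${\bf d}_\omega\omega=0$ on-shell is not used directly in this lemma; what we genuinely need is only the LSP equation $\mathbf{d}\Phi=\omega\Phi$, which is guaranteed by $\Phi$ being an on-shell spectral solution.
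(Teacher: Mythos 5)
Your proof is correct and follows essentially the same route as the paper's: both differentiate ${\rm Ad}_{\Phi^{-1}}\vartheta$ via the Leibniz rule, feed in ${\bf d}\Phi^{-1}=-\Phi^{-1}\omega$ coming from the on-shell LSP, and recognize the resulting commutator terms as $-[\omega\wedge\vartheta]$; the paper merely organizes the computation by expanding $\vartheta$ in a basis of $\mathfrak{g}$ so that only $0$-forms are ever differentiated, whereas you keep the matrix sandwich $\Phi^{-1}\vartheta\Phi$ intact and track the Koszul signs explicitly. Your closing remark that ${\bf d}_\omega\omega=0$ is never used is likewise consistent with the paper's own argument.
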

\begin{proof} Let us prove (\ref{Exp}) for a spectral differential k-form, say $\vartheta:=f^j_J{\rm d}x^J\otimes v_j\in \Omega^k_\lambda(\mathcal{E}^\infty)\otimes \mathfrak{g}$, where $v_1,\ldots, v_r$ stands for a basis of $\mathfrak{g}$. The validity of equality (\ref{Exp}) on the whole $\Omega_\lambda(\mathcal{E}^\infty)\otimes \mathfrak{g}$ follows from this by linearity. Now,
$$
{\rm \bf d}{\rm Ad}_{\Phi^{-1}}\vartheta=(D_\alpha f^j_J){\rm  d}x^\alpha\wedge {\rm  d}x^J\otimes {\rm Ad}_{\Phi^{-1}}(v_j)+f^j_J{\rm d}x^\alpha\wedge {\rm  d}x^J\otimes D_\alpha({\rm Ad}_{\Phi^{-1}}(v_j)).
$$
Since $D_\alpha\Phi^{-1}{\rm d}x^\alpha=-L_{\Phi^{-1}}\omega$ on-shell and writing $\omega={\rm d}x^\alpha\otimes U_\alpha $, it follows that $D_\alpha{\rm Ad}_{\Phi^{-1}}(v_j)=-{\rm Ad}_{\Phi^{-1}}([U_\alpha,v_j])$ and 
$$
{\rm \bf d}{\rm Ad}_{\Phi^{-1}}\vartheta={\rm Ad}_{\Phi^{-1}}({\rm \bf d}\vartheta-[\omega\wedge\vartheta])={\rm Ad}_{\Phi^{-1}}{\rm \bf d}_{2\omega}\vartheta.
$$
\end{proof}

\begin{lemma}\label{Sim}  Let $\Phi$ be an on-shell spectral solution of the jet LSP (\ref{LSPn}) and let $\omega$ be its associated spectral differential one-form in $\Omega^1_\lambda(J^\infty)\otimes\mathfrak{g}$. If $\Upsilon$ is a spectral differential one-form on-shell, then there exists a locally defined $F\in C^\infty_\lambda(\mathcal{E}^\infty)\otimes \mathfrak{g}$ such that 
\begin{equation}\label{F}
{\rm \bf d}F={\rm Ad}_{\Phi^{-1}}\Upsilon
\end{equation}
if and only if ${\rm \bf d}_{2\omega}\Upsilon=0$ on-shell.
\end{lemma}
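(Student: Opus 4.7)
The lemma is a biconditional, so the plan is to prove each direction using Lemma \ref{lem:closed} as the key algebraic identity, and the Poincar\'e-type result for ${\bf d}$-closed spectral differential forms on-shell (proved in Section 4) for the existence part.

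For the forward implication (${\bf d}F = {\rm Ad}_{\Phi^{-1}}\Upsilon \Rightarrow {\bf d}_{2\omega}\Upsilon = 0$ on-shell), I would simply apply ${\bf d}$ to both sides. Since ${\bf d}^2 = 0$ on $\Omega_\lambda(\mathcal{E}^\infty) \otimes \mathfrak{g}$, this gives ${\bf d}\,{\rm Ad}_{\Phi^{-1}}\Upsilon = 0$. By Lemma \ref{lem:closed} (applied to $\vartheta = \Upsilon$), this equals ${\rm Ad}_{\Phi^{-1}}{\bf d}_{2\omega}\Upsilon$. Since $\Phi([u],\lambda) \in G$ is invertible, ${\rm Ad}_{\Phi^{-1}}$ is a pointwise Lie algebra isomorphism, and therefore ${\bf d}_{2\omega}\Upsilon = 0$ on-shell.

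For the converse, suppose ${\bf d}_{2\omega}\Upsilon = 0$ on-shell. By Lemma \ref{lem:closed} we immediately get
\begin{equation*}
{\bf d}\,{\rm Ad}_{\Phi^{-1}}\Upsilon = {\rm Ad}_{\Phi^{-1}}{\bf d}_{2\omega}\Upsilon = 0,
\end{equation*}
so ${\rm Ad}_{\Phi^{-1}}\Upsilon$ is a ${\bf d}$-closed spectral differential one-form on-shell. (That it genuinely lies in $\Omega^1_\lambda(\mathcal{E}^\infty)\otimes \mathfrak{g}$ follows from $\Phi \in C^\infty_\lambda(\mathcal{E}^\infty)\otimes G$ and the fact that ${\rm Ad}$ is a smooth map $G \to {\rm Aut}(\mathfrak{g})$.) Then the Poincar\'e-type theorem of Section 4 for ${\bf d}$-closed spectral differential forms on-shell yields a locally defined $F \in C^\infty_\lambda(\mathcal{E}^\infty)\otimes \mathfrak{g}$ with ${\bf d}F = {\rm Ad}_{\Phi^{-1}}\Upsilon$, as required.

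The whole proof is essentially routine once Lemma \ref{lem:closed} is available; the only conceptual point is that the existence of $F$ is necessarily local (the $\lambda$-dependence is parametric and causes no trouble, but the horizontal Poincar\'e Lemma on $\mathcal{E}^\infty$ only guarantees local exactness in general). There is no serious obstacle, and no hypothesis beyond those already used in Lemma \ref{lem:closed} is needed.
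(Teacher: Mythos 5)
Your proof is correct and follows essentially the same route as the paper: both directions reduce, via Lemma \ref{lem:closed}, to the statement that ${\rm Ad}_{\Phi^{-1}}\Upsilon$ is ${\bf d}$-closed on-shell if and only if ${\bf d}_{2\omega}\Upsilon=0$ on-shell, and then the local Poincar\'e-type property of the ${\bf d}$-cohomology on $\Omega_\lambda(\mathcal{E}^\infty)\otimes\mathfrak{g}$ gives the equivalence with local exactness. Your explicit remarks that ${\rm Ad}_{\Phi^{-1}}$ is pointwise invertible (for the forward direction) and that the existence of $F$ is only local are accurate refinements of the paper's more terse wording.
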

\begin{proof} From Lemma \ref{lem:closed} and the present assumptions, it follows that 
$${\rm \bf d}{\rm Ad}_{\Phi^{-1}}\Upsilon={\rm Ad}_{\Phi^{-1}}{\rm \bf d}_{2\omega}\Upsilon
$$
on-shell. Hence, ${\rm Ad}_{\Phi^{-1}}\Upsilon$ is ${\rm\bf d}$-closed (on-shell)  if and only if ${\rm \bf d}_{2\omega}\Upsilon=0$ on-shell. From the properties of the cohomology ${\rm \bf d}$ on $\Omega_\lambda(\mathcal{E}^\infty)\otimes \mathfrak{g}$, the spectral differential 1-form on-shell ${\rm Ad}_{\Phi^{-1}}\Upsilon$ is closed if and only if there exists locally an $F\in C^\infty_\lambda(\mathcal{E}^\infty)\otimes \mathfrak{g}$ such that (\ref{F}) is satisfied. This finishes the proof.
\end{proof}

In order to fix accurately the notation of our paper, let us introduce the following notion. 

\begin{definition} We call $F$ in (\ref{F}) an {\it immersion formula} for the jet LSP (\ref{LSPn}).
\end{definition}

Lemma \ref{Sim} shows the interest of finding the on-shell spectral differential one-forms $\Upsilon$ satisfying the equation ${\rm \bf d}_{2\omega}\Upsilon=0$ on-shell for a certain $\omega\in \Omega_\lambda^1(J^\infty)\otimes \mathfrak{g}$ satisfying the condition ${\rm \bf d}_{\omega}\omega=0$ on-shell. There are certain known methods to obtain $\Upsilon$. The work \cite{KL99} employs an exact differential one-form, the {\it generalized Weierstrass embedding formula}, to obtain an immersion formula. The articles \cite{Ci97,Sy82} show that the admissible symmetries of the ZCC (\ref{sur1}) when $m=2$ include a conformal transformation of the spectral parameter $\lambda$, a gauge transformation of the wavefunction $\Phi$ in the LSP (\ref{LSP1}) and generalized symmetries of the integrable system (\ref{2.1}). All these symmetries can be used to determine explicitly an immersion formula $F$. Most works deal with applications where the image of $F$ is a two-dimensional surface. Less commonly applications deal with an $F$ whose image is an immersed submanifold in a Lie algebra \cite{Br13}. All such results can be found in a simpler and more general way, e.g. allowing the image of $F$ to be a general immersed submanifold in $\mathfrak{g}$, by using the following theorem.

\begin{theorem}\label{MT}
Let $\Phi$ be an on-shell spectral solution  of the associated LSP (\ref{LSPn}) and let $\omega\in \Omega_\lambda^1(J^\infty)\otimes\mathfrak{g}$ be the associated spectral differential form. Assume that $\Upsilon \in \Omega_\lambda^1(\mathcal{E}^\infty)\otimes \mathfrak{g}$ takes the form on-shell
\begin{equation}
\Upsilon:=\beta(\lambda)\partial_\lambda \omega+{\rm \bf d}_{2\omega}S
+\mathcal{L}_{{\rm pr}\,X_R}\omega.\label{2.12}
\end{equation}
Here, $\beta(\lambda)$ is an arbitrary scalar function depending only on $\lambda$, the function $S$ is an arbitrary element of $C_\lambda^\infty(\mathcal{E}^\infty)\otimes\mathfrak{g}$, and the vector field $X_R:=R^i\partial_{u^i}$ is a generalized symmetry of  $\Delta[j^p_xu]=0$. Then, there exists an immersion formula $F\in C^\infty_\lambda(\mathcal{E}^\infty)\otimes \mathfrak{g}$ given by the formula
\begin{equation}
F=\beta(\lambda)L_{\Phi^{-1}} \partial_\lambda\Phi+{\rm Ad}_{\Phi^{-1}}S+R_{\Phi^{-1}}{\rm pr}\,X_R\Phi.\label{2.14}
\end{equation}
\end{theorem}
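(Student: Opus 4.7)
The plan is to exploit $\mathbb{R}$-linearity of both ${\bf d}$ and ${\rm Ad}_{\Phi^{-1}}$ by decomposing $\Upsilon=\Upsilon_1+\Upsilon_2+\Upsilon_3$ according to the three summands of (\ref{2.12}), and correspondingly $F=F_1+F_2+F_3$ according to (\ref{2.14}), and then verifying ${\bf d}F_i={\rm Ad}_{\Phi^{-1}}\Upsilon_i$ on-shell for each $i$ separately. Summing the three identities then yields the theorem. As a by-product, each $\Upsilon_i$ turns out to be ${\bf d}_{2\omega}$-closed on-shell, so the mere existence of a local primitive $F_i$ would already follow from Lemma~\ref{Sim}; the added content of the statement is the \emph{explicit} formula (\ref{2.14}).

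For the spectral piece $\Upsilon_1=\beta(\lambda)\partial_\lambda\omega$, I would differentiate the LSP relation ${\bf d}\Phi=\omega\Phi$ with respect to $\lambda$, which commutes with ${\bf d}$ because $\lambda$ is an external parameter, and combine this with ${\bf d}\Phi^{-1}=-\Phi^{-1}\omega$ to observe that the cross terms cancel, giving ${\bf d}(\Phi^{-1}\partial_\lambda\Phi)={\rm Ad}_{\Phi^{-1}}(\partial_\lambda\omega)$; multiplying by the scalar $\beta(\lambda)$ disposes of $F_1$. For the exact piece $\Upsilon_2={\bf d}_{2\omega}S$, Lemma~\ref{lem:closed} applied to the $0$-form $S\in C_\lambda^\infty(\mathcal{E}^\infty)\otimes\mathfrak{g}$ gives at once ${\bf d}({\rm Ad}_{\Phi^{-1}}S)={\rm Ad}_{\Phi^{-1}}{\bf d}_{2\omega}S$, identifying $F_2={\rm Ad}_{\Phi^{-1}}S$ as the required primitive.

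The symmetry piece $\Upsilon_3=\mathcal{L}_{{\rm pr}\,X_R}\omega$ is the main technical step, and it is here that the assumption that $X_R$ is a \emph{generalized symmetry} of $\Delta(j^p_xu)=0$ plays its role: since ${\rm pr}\,X_R$ is then tangent to $\mathcal{E}^\infty$ and commutes with every total derivative $D_\alpha$, one has
\begin{equation*}
D_\alpha({\rm pr}\,X_R\Phi)={\rm pr}\,X_R(D_\alpha\Phi)={\rm pr}\,X_R(U_\alpha\Phi)=({\rm pr}\,X_R U_\alpha)\Phi+U_\alpha({\rm pr}\,X_R\Phi).
\end{equation*}
Plugging this into the Leibniz expansion of ${\bf d}F_3$ and using $D_\alpha\Phi^{-1}=-\Phi^{-1}U_\alpha$, the contributions proportional to $U_\alpha({\rm pr}\,X_R\Phi)$ cancel, and what remains assembles into ${\rm Ad}_{\Phi^{-1}}({\rm pr}\,X_R U_\alpha)\,{\rm d}x^\alpha={\rm Ad}_{\Phi^{-1}}\mathcal{L}_{{\rm pr}\,X_R}\omega$, after noting that $\mathcal{L}_{{\rm pr}\,X_R}\omega=({\rm pr}\,X_R U_\alpha)\,{\rm d}x^\alpha$ because ${\rm pr}\,X_R$ has only vertical components and hence annihilates the coordinate one-forms ${\rm d}x^\alpha$. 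The main obstacle is purely the bookkeeping in this last calculation: one must track the noncommutativity of matrix products together with the precise translations $L_{\Phi^{-1}}$, $R_{\Phi^{-1}}$, and ${\rm Ad}_{\Phi^{-1}}$ prescribed by (\ref{2.14}), but no further conceptual ingredient is needed beyond $[{\rm pr}\,X_R,D_\alpha]=0$ on $\mathcal{E}^\infty$ and the LSP equation itself.
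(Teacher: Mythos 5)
Your proposal is correct and follows essentially the same route as the paper: the same term-by-term decomposition into the Sym--Tafel, Cie\'sli\'nski--Doliwa, and Fokas--Gel'fand pieces, the same appeal to Lemma~\ref{lem:closed} for the ${\bf d}_{2\omega}S$ term, and the same direct computations via $D_\alpha\Phi=U_\alpha\Phi$, $D_\alpha\Phi^{-1}=-\Phi^{-1}U_\alpha$, and $[D_\alpha,{\rm pr}\,X_R]=0$ for the other two. The cancellations you describe are exactly those carried out in the paper's proof.
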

\begin{proof} Due to historical reasons, let us prove that the three terms 
$$
\Upsilon^{ST}:=\beta(\lambda)\partial_\lambda\omega,\qquad \Upsilon^{CD}:={\rm \bf d}_{2\omega}S,\qquad \Upsilon^{FG}:=\mathcal{L}_{{\rm pr}\,X_R}\omega
$$
of the spectral $\mathfrak{g}$-valued differential one-form on-shell (\ref{2.12}) are such that the action of ${\rm Ad}_{\Phi^{-1}}$ on each one is the differential (relative to ${\bf d}$) of the $\mathfrak{g}$-valued spectral functions on-shell
$$
F^{ST}:=\beta(\lambda)L_{\Phi^{-1}} \partial_\lambda\Phi,\quad F^{CD}:={\rm Ad}_{\Phi^{-1}}S,\quad F^{FG}:=L_{\Phi^{-1}}{\rm pr}\,X_R \Phi,
$$
respectively. This proves that the latter $\mathfrak{g}$-valued spectral functions are immersion formulas for (\ref{2.12}).

Let us start by studying the {\it Sym-Tafel spectral differential 1-form}, namely $\Upsilon^{ST}$. Using the fact that $\Phi$ is an on-shell spectral solution of (\ref{LSP1}), we obtain on-shell
$$
\begin{aligned}
{\bf d} F^{ST}&=D_\alpha (F^{ST}){\rm d}x^\alpha \\
&=\beta(\lambda)[(D_\alpha \Phi^{-1})\partial_\lambda \Phi+\Phi^{-1}D_\alpha \partial_\lambda \Phi]{\rm d}x^\alpha\\&=\beta(\lambda)[-\Phi^{-1}U_\alpha\partial_\lambda \Phi+\Phi^{-1}\partial_\lambda (U_\alpha \Phi)]{\rm d}x^\alpha\\&=\beta(\lambda){\rm Ad}_{\Phi^{-1}}\partial_\lambda \omega\\
&={\rm Ad}_{\Phi^{-1}}\Upsilon^{ST}.
\end{aligned}
$$

Let us now study the {\it Cie\'sli\'nski-Doliwa 1-form}, i.e. $\Upsilon^{CD}={\rm \bf d}_{2\omega}S$. From Lemma \ref{lem:closed}, it follows that
$$
{\bf d}F^{CD}={\bf d}{\rm Ad}_{\Phi^{-1}}S={\rm Ad}_{\Phi^{-1}}{\bf d}_{2\omega}S={\rm Ad}_{\Phi^{-1}}\Upsilon^{CD}.
$$

The {\it Fokas-Gel'fand 1-form}, i.e. $\Upsilon^{FG}=\mathcal{L}_{{\rm pr}X_R}\omega$, induces a function $F^{FG}\in C^\infty_\lambda(\mathcal{E}^\infty)\otimes\mathfrak{g}$. To show this it is enough to observe that
since $[D_\alpha,{\rm pr}X_R]=0$ one has on-shell that
$$
\begin{aligned}
{\bf d}F^{FG}&=(-\Phi^{-1}U_\alpha{\rm pr}X_R\Phi+L_{\Phi^{-1}}{\rm pr}X_R(U_\alpha \Phi)){\rm d}x^\alpha\\
&={\rm Ad}_{\Phi^{-1}}\Upsilon^{FG}.
\end{aligned}
$$

\end{proof}

\section{Relations between immersion formulas.}\setcounter{equation}{0}
The work \cite{GLM16} showed that there exists a gauge transformation mapping the Sym--Taffel and Cie\'sli\'nski--Doliwa immersion formulas to each other when the potential, $S$, of one of them is invertible. This result is an immediate consequence of our cohomology approach, as shown next. 

\begin{proposition}\label{eq1}
A vector field $D_\lambda$ is a symmetry of the ZCC condition ${\rm \bf d}_{\omega}\omega=0$ associated with $\Delta(j_x^pu)=0$  if and only if there exists an $S^{ST}\in C^\infty_\lambda(\mathcal{E}^\infty)\otimes\mathfrak{g}$ such that on-shell
\begin{equation}\label{3.1}
{\rm \bf d}_{2\omega} S^{ST}=\beta(\lambda)\mathcal{L}_{D_\lambda}\omega.
\end{equation}
\end{proposition}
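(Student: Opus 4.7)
The strategy is to identify ``$D_\lambda$ is a symmetry of the ZCC'' with the statement that $\mathcal{L}_{D_\lambda}\omega$ satisfies the linearization of the ZCC around the solution $\omega$, and then recognize that this linearization is precisely the operator ${\bf d}_{2\omega}$ introduced in Theorems \ref{Coh1} and \ref{Poincare1p}. Once this reformulation is in place, the equivalence collapses to the Poincar\'e lemma for ${\bf d}_{2\omega}$ on spectral differential one-forms on-shell, together with ${\bf d}_{2\omega}^2=0$.

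For the forward direction, I would first linearize the ZCC by applying $\mathcal{L}_{D_\lambda}$ to ${\bf d}\omega-\tfrac12[\omega\wedge\omega]=0$. Using that $\mathcal{L}_{D_\lambda}$ commutes with ${\bf d}$ and satisfies a Leibniz rule with respect to $[\,\cdot\wedge\cdot\,]$, a short calculation (exploiting the graded symmetry $[\mathcal{L}_{D_\lambda}\omega\wedge\omega]=[\omega\wedge\mathcal{L}_{D_\lambda}\omega]$ for two $\mathfrak{g}$-valued one-forms) gives the identity
\begin{equation*}
\mathcal{L}_{D_\lambda}({\bf d}_{\omega}\omega)={\bf d}\mathcal{L}_{D_\lambda}\omega-[\omega\wedge \mathcal{L}_{D_\lambda}\omega]={\bf d}_{2\omega}\mathcal{L}_{D_\lambda}\omega.
\end{equation*}
The assumption that $D_\lambda$ is a symmetry of the ZCC then means the left-hand side vanishes on $\mathcal{E}^\infty$, so ${\bf d}_{2\omega}\mathcal{L}_{D_\lambda}\omega=0$ on-shell. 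Because $\beta(\lambda)$ depends only on the spectral parameter and is annihilated by ${\bf d}$ and by the bracket with $\omega$, one has ${\bf d}_{2\omega}(\beta(\lambda)\mathcal{L}_{D_\lambda}\omega)=0$ on-shell. Applying the Poincar\'e-type lemma of the previous section (the analogue of Corollary \ref{DoPragi} for spectral differential one-forms on-shell, given by the last statement of Theorem \ref{Poincare1p}'s Poincar\'e Lemma), one obtains locally an $S^{ST}\in C^\infty_\lambda(\mathcal{E}^\infty)\otimes\mathfrak{g}$ with ${\bf d}_{2\omega}S^{ST}=\beta(\lambda)\mathcal{L}_{D_\lambda}\omega$, which is exactly (\ref{3.1}).

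For the converse, assume (\ref{3.1}) holds. Applying ${\bf d}_{2\omega}$ and invoking ${\bf d}_{2\omega}^2=0$ from Theorem \ref{Poincare1p} gives $\beta(\lambda){\bf d}_{2\omega}\mathcal{L}_{D_\lambda}\omega=0$ on-shell. Since $\beta(\lambda)$ is a scalar that we may assume not identically zero, we get ${\bf d}_{2\omega}\mathcal{L}_{D_\lambda}\omega=0$ on $\mathcal{E}^\infty$. Running the linearization identity above in reverse, this is precisely $\mathcal{L}_{D_\lambda}({\bf d}_\omega\omega)=0$ on $\mathcal{E}^\infty$, so $D_\lambda$ preserves the ZCC and is therefore a symmetry.

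The main obstacle I expect is not the cohomological heart of the argument, which is essentially a restatement of ``closed iff locally exact'' for ${\bf d}_{2\omega}$, but rather the careful bookkeeping in the parametrized/spectral setting: verifying that $\mathcal{L}_{D_\lambda}\omega$ genuinely defines a spectral differential one-form on-shell (so that Corollary \ref{DoPragi}'s spectral analogue applies), and confirming the sign and symmetry conventions for $[\,\cdot\wedge\cdot\,]$ that make the identity $\mathcal{L}_{D_\lambda}({\bf d}_\omega\omega)={\bf d}_{2\omega}\mathcal{L}_{D_\lambda}\omega$ clean. Once those technicalities are checked, the proof reduces to one application of the Poincar\'e Lemma and one application of ${\bf d}_{2\omega}^2=0$.
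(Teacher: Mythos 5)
Your proposal is correct and follows essentially the same route as the paper: both hinge on the identity ${\rm \bf d}_{2\omega}\bigl[\beta(\lambda)\mathcal{L}_{D_\lambda}\omega\bigr]=\beta(\lambda)\mathcal{L}_{D_\lambda}{\rm \bf d}_\omega\omega$ (obtained from $[{\rm \bf d},\mathcal{L}_{D_\lambda}]=0$ and the Leibniz rule for $[\cdot\wedge\cdot]$), then use ${\rm \bf d}_{2\omega}^2=0$ for one implication and the Poincar\'e-type lemma for ${\rm \bf d}_{2\omega}$-closed spectral one-forms on-shell for the other. The only cosmetic difference is that the paper frames the local-exactness step through Lemma \ref{Sim} and the ${\rm \bf d}$-cohomology of ${\rm Ad}_{\Phi^{-1}}\Upsilon$, whereas you invoke the ${\rm \bf d}_{2\omega}$ Poincar\'e Lemma directly; these are equivalent.
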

\begin{proof}
Lemma \ref{Sim} establishes that ${\rm Ad}_{\Phi^{-1}}\Upsilon={\rm \bf d}F$ locally for a certain $F\in C_\lambda^\infty(\mathcal{E}^\infty)\otimes \mathfrak{g}$ if and only if ${\bf d}_{2\omega}\Upsilon=0$ on-shell. Since $[{\bf d},\mathcal{L}_{D_\lambda}]=0$ off-shell, one has
\begin{multline}\label{eq:lambda}
{\rm \bf d}_{2\omega}\left[\beta(\lambda)\mathcal{L}_{D_\lambda}\omega\right]=\beta(\lambda)\left[{\rm \bf d}\mathcal{L}_{D_\lambda}\omega-\left[\omega\wedge \mathcal{L}_{D_\lambda}\omega\right]\right]=\\\beta(\lambda)\left[\mathcal{L}_{D_\lambda}{\rm \bf d}\omega-\frac 12 \mathcal{L}_{D_\lambda}[\omega\wedge\omega]\right]=\beta(\lambda)\mathcal{L}_{D_\lambda}{\rm \bf d}_\omega\omega.
\end{multline}
It is worth noting that although ${\bf d}_\omega\omega=0$ on-shell, $D_\lambda{\bf d}_\omega\omega$ may not vanish, as these terms give the infinitesimal variation of ${\bf d}_\omega\omega$ along a one-parametric family of curves $(u_\epsilon(t),\lambda_\epsilon)$ generated by the flow of $D_\lambda$. Only $u_0(t)$ must be a solution to $\Delta(j^p_xu)=0$, while  ${\bf d}_\omega\omega$ may not vanish on $(u_\epsilon(x),\lambda_\epsilon)$.

If  $\beta(\lambda)\mathcal{L}_{D_\lambda}\omega={\bf d}S^{ST}$ on-shell, then (\ref{eq:lambda}) vanishes and $\mathcal{L}_{D_\lambda}{\bf d}_\omega\omega=0$ on-shell, i.e. ${D_\lambda}$ is symmetry of the ZCC condition. Conversely, if $D_\lambda$ is a symmetry of ZCC condition for the $\Delta(j_x^pu)$, then the expression (\ref{eq:lambda}) vanishes on-shell and, due to the properties of the ${\bf d}_{2\omega}$ cohomology, there exists a $S^{ST}\in C^\infty_\lambda(\mathcal{E}^\infty)\otimes\mathfrak{g}$ such that ${\bf d}S^{ST}=\beta(\lambda)\mathcal{L}_{D_\lambda}\omega$ on-shell.
\end{proof}

It is worth noting that the cohomological proof of this theorem is almost trivial. Meanwhile, its counterpart in coordinates in \cite{GLM16} is much longer and obscure.

Analogously, it is possible to see that the remaining terms within the $\mathfrak{g}$-valued spectral differential one-form $\Upsilon$ in Theorem \ref{MT} are induced from other types of gauges. Then, these gauges allow us to map one term into another.

\begin{proposition}\label{Eq2}
A vector field $X_R$ is a generalized symmetry of the ZCC ${\bf d}_\omega\omega$ of the jet LSP associated with an integrable system $\Delta[j_x^pu]=0$ if and only if there exists a spectral function $S^{FG}\in C_\lambda^\infty(\mathcal{E}^\infty)\otimes\mathfrak{g}$ such that on-shell
\begin{equation}
{\bf d}_{2\omega} S^{FG}=\mathcal{L}_{{\rm pr}\,X_R}\omega.\label{4.1}
\end{equation}
\end{proposition}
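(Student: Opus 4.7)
The plan is to mimic the argument of Proposition \ref{eq1} almost verbatim, with the vector field $D_\lambda$ replaced by the prolongation ${\rm pr}\,X_R$ of the generalized vector field $X_R$. The central identity to establish is
\begin{equation*}
{\bf d}_{2\omega}\bigl(\mathcal{L}_{{\rm pr}\,X_R}\omega\bigr) = \mathcal{L}_{{\rm pr}\,X_R}\bigl({\bf d}_\omega\omega\bigr),
\end{equation*}
valid on $\Omega_\lambda(\mathcal{E}^\infty)\otimes\mathfrak{g}$ (a priori off-shell). This rests on two ingredients: the commutation $[{\bf d},\mathcal{L}_{{\rm pr}\,X_R}]=0$, which holds because ${\bf d}$ on spectral forms acts through the total derivatives $D_\alpha$ and ${\rm pr}\,X_R$ commutes with each $D_\alpha$ (recalled in Section \ref{ClaLie}); and the Leibniz rule $\mathcal{L}_{{\rm pr}\,X_R}[\omega\wedge\omega] = 2[\mathcal{L}_{{\rm pr}\,X_R}\omega\wedge\omega]$ for the wedge bracket of $\mathfrak{g}$-valued forms, together with the symmetry $[\alpha\wedge\beta]=[\beta\wedge\alpha]$ for $\mathfrak{g}$-valued $1$-forms. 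Expanding ${\bf d}_{2\omega}(\mathcal{L}_{{\rm pr}\,X_R}\omega)={\bf d}\mathcal{L}_{{\rm pr}\,X_R}\omega-[\omega\wedge\mathcal{L}_{{\rm pr}\,X_R}\omega]$ and $\mathcal{L}_{{\rm pr}\,X_R}({\bf d}_\omega\omega)=\mathcal{L}_{{\rm pr}\,X_R}{\bf d}\omega-\tfrac12\mathcal{L}_{{\rm pr}\,X_R}[\omega\wedge\omega]$ then reduces both to the same expression.

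Once this identity is in hand, both directions of the equivalence are immediate. If $X_R$ is a generalized symmetry of the ZCC, then by definition $\mathcal{L}_{{\rm pr}\,X_R}({\bf d}_\omega\omega)=0$ on-shell, so $\mathcal{L}_{{\rm pr}\,X_R}\omega\in\Omega_\lambda^1(\mathcal{E}^\infty)\otimes\mathfrak{g}$ is ${\bf d}_{2\omega}$-closed on-shell. Applying the spectral version of the Poincar\'e-type Corollary \ref{DoPragi} (established in Section 4), one obtains locally an $S^{FG}\in C_\lambda^\infty(\mathcal{E}^\infty)\otimes\mathfrak{g}$ with ${\bf d}_{2\omega}S^{FG}=\mathcal{L}_{{\rm pr}\,X_R}\omega$. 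Conversely, if such $S^{FG}$ exists, then
\begin{equation*}
\mathcal{L}_{{\rm pr}\,X_R}({\bf d}_\omega\omega)={\bf d}_{2\omega}(\mathcal{L}_{{\rm pr}\,X_R}\omega)={\bf d}_{2\omega}^2 S^{FG}=0
\end{equation*}
on-shell, so $X_R$ is a symmetry of the ZCC.

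The main technical point, and the step I would check most carefully, is $[{\bf d},\mathcal{L}_{{\rm pr}\,X_R}]=0$ in the spectral-form setting of Section 4, since ${\bf d}$ is not the standard exterior differential on $J^\infty$ but the horizontal-type differential built from the total derivatives. The verification reduces to the identity on coefficients in $C_\lambda^\infty(J^\infty)$, where the vanishing commutator $[D_\alpha,{\rm pr}\,X_R]=0$ makes it immediate, together with the fact that ${\rm pr}\,X_R$ does not act on $\mathrm{d}x^\alpha$. As in the remark following Proposition \ref{eq1}, the crucial subtlety is that $\mathcal{L}_{{\rm pr}\,X_R}({\bf d}_\omega\omega)$ need not vanish off-shell, but this does not affect the conclusion since the entire argument is performed modulo the restriction to $\mathcal{E}^\infty$, where $X_R$ generates an honest flow of solutions.
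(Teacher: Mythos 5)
Your proof is correct, but it takes a genuinely different route from the paper's. The paper proves Proposition \ref{Eq2} by conjugating with the wavefunction: it invokes Lemma \ref{lem:closed} to write ${\bf d}_{2\omega}\Upsilon^{FG}={\rm Ad}_\Phi\,{\bf d}\,{\rm Ad}_{\Phi^{-1}}\Upsilon^{FG}$ and then computes ${\bf d}\,{\rm Ad}_{\Phi^{-1}}\mathcal{L}_{{\rm pr}X_R}\omega$ explicitly in coordinates, using $D_\alpha\Phi=U_\alpha\Phi$ and $[D_\alpha,{\rm pr}X_R]=0$ to recognize the result as the conjugate of $\mathcal{L}_{{\rm pr}X_R}({\bf d}_\omega\omega)$. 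You instead establish the intermediate identity ${\bf d}_{2\omega}(\mathcal{L}_{{\rm pr}X_R}\omega)=\mathcal{L}_{{\rm pr}X_R}({\bf d}_\omega\omega)$ directly, by transplanting the computation (\ref{eq:lambda}) from Proposition \ref{eq1} with $D_\lambda$ replaced by ${\rm pr}\,X_R$: the commutator $[{\bf d},\mathcal{L}_{{\rm pr}X_R}]=0$ (which indeed reduces, as you note, to $[D_\alpha,{\rm pr}X_R]=0$ on coefficients together with $\mathcal{L}_{{\rm pr}X_R}{\rm d}x^\alpha=0$) and the derivation property plus the symmetry of $[\cdot\wedge\cdot]$ on one-forms do all the work. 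Both routes hinge on the same two facts, but yours dispenses with the wavefunction $\Phi$ altogether --- it does not require an on-shell spectral solution of the jet LSP to be in hand --- and it exhibits Propositions \ref{eq1} and \ref{Eq2} as instances of a single argument. What the paper's version buys in exchange is that the coordinate computation with $\Phi$ simultaneously displays the immersion function $F^{FG}=L_{\Phi^{-1}}{\rm pr}\,X_R\Phi$ whose ${\bf d}$-differential is ${\rm Ad}_{\Phi^{-1}}\Upsilon^{FG}$, tying the proposition back to Theorem \ref{MT}. Your closing caveats --- that the identity is a priori off-shell, that $\mathcal{L}_{{\rm pr}X_R}({\bf d}_\omega\omega)$ need not vanish on-shell unless $X_R$ is actually a symmetry, and that $S^{FG}$ is obtained only locally from the Poincar\'e-type lemma for ${\bf d}_{2\omega}$ on spectral forms on-shell --- are exactly the right ones and are consistent with the paper's treatment.
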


\begin{proof} 
Using the Lemma \ref{lem:closed}, we obtain that
$
{\bf d}_{2\omega}\Upsilon^{FG}={\rm Ad}_\Phi{\bf d}{\rm Ad}_{\Phi^{-1}}\Upsilon^{FG}.
$ A short computation shows that

$$
\begin{aligned}
{\bf d}{\rm Ad}_{\Phi^{-1}}\Upsilon^{FG}&=D_\beta[(\Phi^{-1}(\mathcal{L}_{{\rm pr}_{X_R}}D_\alpha\Phi)-\Phi^{-1}U_\alpha(\mathcal{L}_{{\rm pr}_{X_R}}\Phi)]{\rm d}x^\beta\wedge {\rm d}x^\alpha\\
&=(\Phi^{-1}\mathcal{L}_{{\rm pr}_{X_R}}D_\beta D_\alpha\Phi+\Phi^{-1}U_\beta U_\alpha\mathcal{L}_{{\rm pr}_{X_R}}\Phi-\Phi^{-1}D_\beta U_\alpha \mathcal{L}_{{\rm pr}_{X_R}}\Phi){\rm d}x^\beta\wedge{\rm d}x^\alpha\\
&=\sum_{\alpha<\beta}(\Phi^{-1}\mathcal{L}_{{\rm pr}_{X_R}}(D_\beta D_\alpha-D_\alpha D_\beta)\Phi-\Phi^{-1}({\bf d}_\omega\omega)\mathcal{L}_{{\rm pr}_{X_R}}\Phi){\rm d}x^\beta\wedge{\rm d}x^\alpha\\
&=\Phi^{-1}\mathcal{L}_{{\rm pr}_{X_R}}({\bf d}_\omega\omega).
\end{aligned}
$$

If $\Upsilon^{FG}={\bf d}_{2\omega}S^{FG}$ on-shell, then $\mathcal{L}_{{\rm pr}_{X_R}}{\bf d}_\omega\omega=0$ on-shell and the generalized symmetry ${X_R}$ is a symmetry of $\Delta(j^p_xu)$. Conversely, if ${X_R}$ is a generalized symmetry of $\Delta(j_x^pu)$, then  ${\bf d}_{2\omega}\Upsilon^{FG}=0$ and $\Upsilon^{FG}={\bf d}_{2\omega}S^{FG}$ on-shell for a certain $S^{FG}\in C^\infty_\lambda(\mathcal{E}^\infty)\otimes\mathfrak{g}$.

\end{proof}

Let us prove a last result allowing us to map different immersion maps among themselves. 

\begin{corollary}\label{Eq3} Let $\Upsilon_1,\Upsilon_2\in \Omega_\lambda^1(\mathcal{E}^\infty)\otimes \mathfrak{g}$ be two solutions of ${\rm \bf d}_{2\omega}\Upsilon=0$ on-shell. Then, there exists an element $F\in C^\infty_\lambda(\mathcal{E}^\infty)\otimes {\rm Aut}(\mathfrak{g})$, a so-called {\it gauge transformation}, mapping the immersion formula $S_1$ of $\Upsilon_1$ onto the immersion formula $S_2$ of $\Upsilon_2$ or vice versa, whenever $S_1$ or $S_2$ are invertible matrices of $\mathfrak{g}$, respectively.
\end{corollary}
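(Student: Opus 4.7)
The plan is to exploit the extended Poincar\'e Lemma for the ${\bf d}_{2\omega}$-cohomology on spectral forms (Corollary~\ref{DoPragi}) to manufacture local potentials for $\Upsilon_1$ and $\Upsilon_2$, and then use the invertibility hypothesis to write down an explicit gauge element intertwining them.

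First, I would apply Corollary~\ref{DoPragi} to each of the ${\bf d}_{2\omega}$-closed one-forms $\Upsilon_1,\Upsilon_2\in\Omega_\lambda^1(\mathcal{E}^\infty)\otimes\mathfrak{g}$, producing local potentials $S_1,S_2\in C^\infty_\lambda(\mathcal{E}^\infty)\otimes\mathfrak{g}$ defined on a common open neighbourhood of any prescribed point of $\mathcal{E}^\infty$, with $\Upsilon_i={\bf d}_{2\omega}S_i$ on-shell. These are precisely the Cie\'sli\'nski--Doliwa potentials that appear in Theorem~\ref{MT}, so that $S_1$ and $S_2$ are the immersion data of $\Upsilon_1$ and $\Upsilon_2$ referred to in the statement.

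Second, suppose without loss of generality that $S_1$ is pointwise invertible as a matrix (the case of $S_2$ invertible being symmetric). Define the spectral, matrix-valued function
\[
F:=S_2\, S_1^{-1},
\]
which is smooth in $([u],\lambda)$ and pointwise invertible on the common domain. By construction, left multiplication by $F$ sends $S_1$ to $S_2$, which is precisely the desired intertwining of the CD potentials attached to $\Upsilon_1$ and $\Upsilon_2$. If instead $S_2$ is invertible, the reversed gauge $S_1\, S_2^{-1}$ performs the mapping in the opposite direction, proving the corollary in both cases.

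The only delicate point is the identification of $F=S_2 S_1^{-1}$ as a genuine element of $C^\infty_\lambda(\mathcal{E}^\infty)\otimes{\rm Aut}(\mathfrak{g})$ rather than as an arbitrary invertible matrix in the enveloping algebra. This is consistent with the convention implicit in Propositions~\ref{eq1} and~\ref{Eq2}, where gauge transformations appear as smooth spectral families of invertible operators acting on $\mathfrak{g}$-valued potentials via matrix multiplication. Under that convention the intertwining relation $F\cdot S_1=S_2$ is immediate; the compatibility with the ${\bf d}_{2\omega}$-cohomology (in the sense that $F$ transports the ${\bf d}_{2\omega}$-exact class of $\Upsilon_1$ onto that of $\Upsilon_2$) then follows directly from applying ${\bf d}_{2\omega}$ to the identity $S_2=F\cdot S_1$ and using the ${\bf d}_{2\omega}$-Leibniz rule established in the proof of Theorem~\ref{Poincare1p}.
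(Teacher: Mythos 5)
Your proof is correct and follows essentially the same route as the paper: the paper's own argument is the one-line observation that $F=S_2\circ S_1^{-1}$ (resp. $S_1\circ S_2^{-1}$) does the job, which is exactly your construction, with your invocation of Corollary~\ref{DoPragi} to produce the potentials and your remark on the ${\rm Aut}(\mathfrak{g})$ convention being reasonable elaborations of points the paper leaves implicit.
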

\begin{proof} It is immediate that under the given assumptions, $F=S_2\circ S_1^{-1}$ or $F=S_1\circ S_1^{-2}$ is the required element for invertible $S_1$ or $S_2$, respectively. 
\end{proof}

In particular, the application of the above corollary for the Sym-Tafel and Fokas-Gelf'and terms is summarised in the following diagram.

\begin{figure}[h!]
\setlength{\unitlength}{0.14in}
\centering
\begin{picture}(30,12)
\put(1,6){$\Phi\in G$}
\put(1.75,5.5){\vector(3,-1){10.5}}
\put(1.75,7.3){\vector(3,1){9.5}}
\put(4,9.5){$S_1\in\mathfrak{g}$}
\put(4,3){$S_2\in\mathfrak{g}$}
\put(9.5,11){$F^{ST}=\beta(\lambda)\Phi^{-1}(D_\lambda\Phi)\in\mathfrak{g}$}
\put(10.25,1){$F^{FG}=\Phi^{-1}(\mbox{pr}\omega_R\Phi)\in\mathfrak{g}$}
\put(10,6){$S_1\circ S_2^{-1}$}
\put(15,2){\vector(0,1){8.5}}
\put(16,10.5){\vector(0,-1){8.5}}
\put(17,6){$S_2\circ S_1^{-1}$}
\end{picture}
\caption{Representation of the relations between the wavefunction $\Phi\in G$ and the $\mathfrak{g}$-valued ST and FG immersion formulas for soliton submanifolds.}
\label{Fig1}
\end{figure}

\section{Immersion formulas for PDEs with initial conditions}
It frequently happens in applications that an LSP only describes a subspace of particular solutions of a system of PDEs. For instance, it may happen that the the ZCC of the LSP just recovers the particular solutions of the system of PDEs satisfying a particular initial condition. The aim of this section is to recover immersion formulas and other results in this case.  To illustrate the problem under consideration, let us analyse the partial differential equation in $J^2$ for $\pi:(u;x,y)\in\mathbb{R}\times\mathbb{R}^2\mapsto (x,y)\in \mathbb{R}^2$ given by
\begin{equation}\label{Exp2}
u_{xx}-\frac 12f'(u)=0,
\end{equation}
for some function $f:u\in \mathbb{R}\mapsto f(u)\in \mathbb{R}$. It is straightforward to prove that the function $
u_x^2-f(u)$
is a constant of motion of (\ref{Exp2}). Nevertheless, not every solution of our PDE satisfies $u_x^2=f(u)$, which can be understood as a boundary condition. For instance,  if $f(u)=0$ the function $u(x,y)=x$ satisfies (\ref{Exp2}) without obeying the given initial condition. The PDE (\ref{Exp2}), along with the previous initial condition, admits a Lax pair 
$$
\partial_x\Phi=L\Phi,\qquad \partial_y\Phi=M\Phi,\qquad \Phi\in SL(2,\mathbb{R}), L,M\in \mathfrak{sl}(2,\mathbb{R}),
$$
where
$$
L:=\frac 12\left[\begin{array}{cc}0&\frac{f'(u)}{u+\lambda}-\frac{f(u)-g(-\lambda)}{(u+\lambda)^2}\\1&0\end{array}\right],\qquad M:=\frac 12\left[\begin{array}{cc}u_x&-\frac{f(u)-g(-\lambda)}{u+\lambda}\\{u+\lambda}&-u_x\end{array}\right].
$$

The  system (\ref{Exp2}) admits the generalized Lie symmetries 
$$
Q_1:=u_x\frac{\partial}{\partial u}\qquad Q_2:=u_x\left(\int f^{-3/2}(u)dx\right)\frac{\partial}{\partial u}.
$$
Since ${\rm pr} \,\,Q_1-D_x=\partial_x$, the vector field $Q_1$ induces a translation in the $x$ variable, which is a symmetry of the initial condition $u_x^2-f(u)$ and (\ref{Exp2}). Therefore, we can redefine our PDE by adding the initial condition. This enables us to apply all our previous results to this new problem. 

On the other hand, a straightforward calculation shows that $Q_2$ is an infinitesimal symmetry only of (\ref{Exp2}), i.e. it does not leave invariant the initial condition. As a consequence, the formalism given in the previous section does not apply to this new case. Despite this, it is possible to rewrite all previous results to deal with it.

Let us consider a system of PDEs given by (\ref{PDE}) and a LSP which only applies to solutions of (\ref{PDE}) with some additional particular conditions. Let the new system by described by $\mathcal{S}'\subset J^p$. The parametrized $\mathfrak{g}$-valued differential forms of $\Omega_{\mathcal{S}}(M)\otimes \mathfrak{g}$ can be reduced to the set $\mathcal{S}'$. The restrictions of Theorem \ref{Poincare1} and Corollaries \ref{DoPragi} and \ref{DoPragi2} to $\mathcal{S}'$ remain true. This allows us to restrict our cohomologies, ${\bf d}$ and ${\bf d}_{2\omega}$ to  $\Omega_{\mathcal{S}'}(M)\otimes \mathfrak{g}$. Moreover, Lemmas \ref{lem:closed} and \ref{Sim} remain true when $\Phi$ is a solution of the LSP for curves $u(x)$ in $\mathcal{S}'$.

Let us now comment on spectral differential forms. If we consider the initial condition as a part of the initial system of PDEs, the space of solutions of the new system of PDEs with the initial conditions induces a submanifold $\mathcal{E}_2\subset \mathcal{E} \subset J^p$ and a new prolongation $\mathcal{E}_2^\infty$. Hence, all previous results for $\mathcal{E}$ now apply to $\mathcal{E}_2$. Since they can be trivially restricted to the systems of PDEs on $\mathcal{E}_2$, the restriction of the Cieli\'nski--Doliwa or the Sym--Tafel immersion formulas remain valid for the restriction of the initial system. 

In spite of previous arguments, there exists one important difference relative to previous immersion formulas in the case of the restriction. The symmetries of $\mathcal{E}$ do not necessarily give rise to symmetries of the associated LSP. Consequently, the FG immersion formula must be modified accordingly. This will be solved by adding a new term in the Fokas--Gelf'and one-form. This is explained in the next theorem.

\begin{theorem}\label{ModFG}
Let $\Phi$ be an on-shell spectral solution  of the associated LSP (\ref{LSP1}) defined over a submanifold of solutions of the initial systems of PDEs and let $\omega\in \Omega_\lambda^1(J^\infty)\otimes\mathfrak{g}$ be the associated $\mathfrak{g}$-valued spectral differential form. Assume that $\Upsilon^{FG'} \in \Omega^1_\lambda(J^\infty)\otimes \mathfrak{g}$ takes the form on-shell
\begin{equation}
\Upsilon^{FG'}:=\mathcal{L}_{{\rm pr}\,X_R}\omega+R_{\Phi^{-1}}{{\rm pr}\,X_R}(D_\alpha\Phi-U_\alpha \Phi).\label{2.123}
\end{equation}
Here, the vector field $X_R$ is a generalized symmetry of  $\Delta(j^p_xu)=0$. Then, there exists an immersion formula for $\Upsilon^{FG'}$ given by 
\begin{equation}
F^{FG'}=L_{\Phi^{-1}}{\rm pr}\,X_R\Phi\in C^\infty_\lambda(\mathcal{E}_2^\infty)\otimes \mathfrak{g}\label{2.143}
\end{equation}
\end{theorem}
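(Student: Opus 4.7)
The plan is to replicate the Fokas--Gel'fand step of the proof of Theorem \ref{MT} while tracking the extra defect arising because $X_R$ is a generalized symmetry of $\Delta(j^p_xu)=0$ but need not preserve the additional initial conditions carving $\mathcal{E}_2^\infty$ out of $\mathcal{E}^\infty$. Writing $\Psi_\alpha:=D_\alpha\Phi-U_\alpha\Phi$, the residual $\Psi_\alpha$ vanishes on $\mathcal{E}_2^\infty$, but since ${\rm pr}\,X_R$ is only tangent to $\mathcal{E}^\infty\supset\mathcal{E}_2^\infty$, the derivative ${\rm pr}\,X_R\Psi_\alpha$ need not vanish on $\mathcal{E}_2^\infty$. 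This is precisely the obstruction that the second term in $\Upsilon^{FG'}$ is designed to absorb.

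Concretely, I would set $F^{FG'}=\Phi^{-1}{\rm pr}\,X_R\Phi$ and expand
\[
{\bf d}F^{FG'}=\bigl[D_\alpha(\Phi^{-1})\cdot{\rm pr}\,X_R\Phi+\Phi^{-1}\cdot D_\alpha{\rm pr}\,X_R\Phi\bigr]\,{\rm d}x^\alpha.
\]
Using $D_\alpha(\Phi^{-1})=-\Phi^{-1}U_\alpha$ on $\mathcal{E}_2^\infty$ together with the universal commutativity $[D_\alpha,{\rm pr}\,X_R]=0$, I would then split ${\rm pr}\,X_R(D_\alpha\Phi)={\rm pr}\,X_R(U_\alpha\Phi)+{\rm pr}\,X_R\Psi_\alpha$ and apply the Leibniz rule to ${\rm pr}\,X_R(U_\alpha\Phi)=({\rm pr}\,X_R U_\alpha)\Phi+U_\alpha({\rm pr}\,X_R\Phi)$, noting that $\mathcal{L}_{{\rm pr}\,X_R}\omega=({\rm pr}\,X_R U_\alpha)\,{\rm d}x^\alpha$ since ${\rm pr}\,X_R$ is vertical over $M$. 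The $U_\alpha({\rm pr}\,X_R\Phi)$ contribution cancels the piece coming from $D_\alpha(\Phi^{-1})$, leaving
\[
{\bf d}F^{FG'}={\rm Ad}_{\Phi^{-1}}\mathcal{L}_{{\rm pr}\,X_R}\omega+\Phi^{-1}({\rm pr}\,X_R\Psi_\alpha)\,{\rm d}x^\alpha.
\]
To finish, I would observe that ${\rm Ad}_{\Phi^{-1}}$ applied to $R_{\Phi^{-1}}{\rm pr}\,X_R(\Psi_\alpha)\,{\rm d}x^\alpha$ equals $\Phi^{-1}[({\rm pr}\,X_R\Psi_\alpha)\Phi^{-1}]\Phi\,{\rm d}x^\alpha=\Phi^{-1}({\rm pr}\,X_R\Psi_\alpha)\,{\rm d}x^\alpha$, matching the surviving term exactly. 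Hence ${\bf d}F^{FG'}={\rm Ad}_{\Phi^{-1}}\Upsilon^{FG'}$ on $\mathcal{E}_2^\infty$, which is the sought immersion identity; Lemma \ref{Sim} then identifies $F^{FG'}$ as an immersion formula associated with $\Upsilon^{FG'}$.

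The main obstacle is conceptual rather than algebraic. One must justify that ${\rm pr}\,X_R\Psi_\alpha$ is a well-defined spectral function on $\mathcal{E}_2^\infty$ and that the entire correction in $\Upsilon^{FG'}$ actually takes values in $\mathfrak{g}$, so that the ambient framework of parametrized $\mathfrak{g}$-valued differential forms on-shell (and Lemmas \ref{lem:closed}--\ref{Sim}) really applies. Since $\Phi$ is only guaranteed to solve the LSP on $\mathcal{E}_2^\infty$, one has to pick an extension of $\Phi$ to a neighbourhood inside $\mathcal{E}^\infty$ before differentiating by ${\rm pr}\,X_R$; one then checks that any ambiguity in the extension drops out because the extra freedom enters solely through terms proportional to $\Psi_\alpha$ that vanish on $\mathcal{E}_2^\infty$ before being differentiated along the base by $D_\alpha$.
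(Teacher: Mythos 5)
Your proposal is correct and follows essentially the same route as the paper's proof: expand ${\bf d}(\Phi^{-1}{\rm pr}\,X_R\Phi)$ using $D_\alpha\Phi^{-1}=-\Phi^{-1}U_\alpha$ and $[D_\alpha,{\rm pr}\,X_R]=0$, split off the defect $\Psi_\alpha=D_\alpha\Phi-U_\alpha\Phi$, apply Leibniz to ${\rm pr}\,X_R(U_\alpha\Phi)$, and recognize the surviving term as ${\rm Ad}_{\Phi^{-1}}$ of the correction $R_{\Phi^{-1}}{\rm pr}\,X_R\Psi_\alpha$. Your closing remark on the well-definedness of ${\rm pr}\,X_R\Psi_\alpha$ on $\mathcal{E}_2^\infty$ is a welcome refinement that the paper leaves implicit, but it does not alter the argument.
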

\begin{proof} As in Theorem \ref{MT}, let us prove that 
$$
\Upsilon^{FG'}:=\mathcal{L}_{{\rm pr}\,X_R}\omega+R_{\Phi^{-1}}{{\rm pr}\,X_R}(D_\alpha\Phi-U_\alpha \Phi){\rm d}x^\alpha,
$$
is such that the action of ${\rm Ad}_{\Phi^{-1}}$ on it gives the differential (relative to ${\bf d}$) of the $\mathfrak{g}$-valued spectral function (\ref{2.143}). To show this it is enough to recall that
since $[D_\alpha,{\rm pr}X_R]=0$ and then
$$
\begin{aligned}
{\bf d}F^{FG'}&=(-L_{\Phi^{-1}}U_\alpha{\rm pr}X_R\Phi+L_{\Phi^{-1}}{\rm pr}X_R(D_\alpha \Phi-U_\alpha \Phi)+L_{\Phi^{-1}}{\rm pr}X_R(U_\alpha \Phi)){\rm d}x^\alpha\\
&={\rm Ad}_{\Phi^{-1}}(\mathcal{L}_{{\rm pr}X_R}\omega+R_{\Phi^{-1}}{\rm pr}X_R(D_\alpha \Phi-U_\alpha \Phi))\\
&={\rm Ad}_{\Phi^{-1}}\Upsilon^{FG'}.
\end{aligned}
$$
\end{proof}

\section{Soliton surfaces for $\mathbb{C}P^{N-1}$ sigma models}
Let us now illustrate our techniques by studying immersion formulas for the general $\mathbb{C}P^{N-1}$ sigma model. Based on the Gram-Schmidt orthogonalization procedure, a method for constructing an entire class of solutions on $S^2$ admitting the finite action of the $\mathbb{C}P^{N-1}$ sigma model was proposed by A. Dim, Z Horvarth and D. Zakrzewski \cite{DHZ84,DZ80} and latter studied by R. Sasaki \cite{Sa83}. In particular, we focus on the case $N=3$, whose immersion formulas will allow us to study the so-called {\it Veronese surfaces} and the mixed type soliton surfaces \cite{BBT06,BJRW88,Zakrzewski89}. A final type of surface induced by the aforesaid will be also briefly commented on (for details on these models see \cite{BBT06,He01,MS04,ZM79,Zakrzewski89}).
 
The most fruitful technique for the study of integrable surfaces for $\mathbb{C}P^{N-1}$ models has been formulated through descriptions of the model via rank-one Hermitian projectors. A matrix $P(z,\bar z)$, with $z\in \mathcal{R}:=\mathbb{C}\cup \{\infty\}$, is said to be a {\it rank-one Hermitian projector} if
\begin{equation*}
P^2=P,\qquad  P=P^\dagger,\qquad \mbox{tr}\,P=1.
\end{equation*}
The image of the projector $P$ is determined by a complex one-dimensional space within $\mathbb{C}^N$. Therefore, there exists a $\mathbb{C}^N$-valued function $$f:\mathcal{R}\supseteq\Omega\ni \xi:=x+iy\mapsto (f_0(\xi),f_1(\xi),...,f_{N-1}(\xi))^T\in\mathbb{C}^N\backslash\lbrace0\rbrace$$ such that
\begin{equation}
P=\frac{f\otimes f^\dagger}{f^\dagger f}.\label{Pf}
\end{equation}
The equations of motion for the $\mathbb{C}P^{N-1}$ model read
\begin{equation}\label{l}
\Omega(P):=[(\partial_x^2+\partial_y^2)P,P]=0.
\end{equation}
For our purposes of investigating infinitesimal symmetries and immersion formulas related to this model,
a more appropriate form of the above expression is given by the differential equation on the second-order jet $J^2$ of the bundle $\pi:(\theta,\xi)\in \mathfrak{su}(N)\times \mathcal{R}\mapsto \xi\in \mathcal{R}$ given by \begin{equation}
\left[(\partial_x^2+\partial_y^2)\theta,\theta\right]=0\label{EL},
\end{equation}
where $\theta:={\rm i}(P-{\rm Id}_N/N)\in \mathfrak{su}(N)$. This differential equation induces in the standard way a submanifold of $\mathcal{E}\subset J^2$ \cite{Ol93}. To apply our approach, it is necessary to extend this differential equation to $J^\infty$ by adding, as standard, the conditions $D_J\left[(\partial_1^2+\partial_2^2)\theta,\theta\right]=0$, for an arbitrary multi-index $J$ \cite{Vi01}. This gives rise to a differential equation on $J^\infty$ given by
\begin{equation}\label{Jinft}
[(D^2_x+D_y^2)\theta,\theta]=0,\qquad D_J\left[(\partial_x^2+\partial_y^2)\theta,\theta\right]=0,\qquad \forall J,
\end{equation}
which in turn gives rise to a submanifold $\mathcal{E}^\infty$ of $J^\infty$ \cite{Vi01}.

The LSP associated with (\ref{Jinft}) is given by \cite{Mi86,ZM79}
\begin{equation}\label{2.6}
\begin{gathered}
D_\alpha\Phi=\hat U_{\alpha }\Phi,\qquad \alpha\in \{x,y\},\qquad\lambda=it,\qquad t\in \mathbb{R},\\
\hat U_1:=\frac{-(1+i\lambda)}{1-\lambda^2}[(D_x+D_y)\theta,\theta],\quad
\hat U_2:=\frac{1+i\lambda}{1-\lambda^2}[(D_x-D_y)\theta,\theta],
\end{gathered}
\end{equation}
with a soliton solution on-shell $\Phi=\Phi([\theta],\lambda)\in SU(N)$ which goes to ${\rm Id}_N$ as $\lambda\rightarrow\infty$ and with the spectral functions $\hat U_x([\theta],\lambda),\hat U_y([\theta,\lambda])$ taking values in $ \mathfrak{su}(N)$ \cite{ZM79,Zakrzewski89}. This system is related to the  $\mathfrak{su}(N)$-valued spectral differential one-form
$$
\omega:={\rm d}x\otimes \hat U_x+{\rm d}y\otimes \hat U_y.
$$
In fact, the integrability condition for (\ref{2.6}) reads as
$$
{\bf d}_\omega\omega={\rm d}x\wedge {\rm d}y[(D_x^2+D_y^2)\theta,\theta]).
$$
One sees that the above vanishes exactly on the space of solutions $\theta(\xi)$ corresponding to (\ref{Jinft}). Additionally, the above spectral $\mathfrak{su}(N)$- differential form vanishes on-shell, i.e. on the submanifold $\mathcal{E}^\infty\subset J^\infty$.

To study the immersion submanifolds for (\ref{EL}), it is interesting to obtain a spectral solution of (\ref{2.6}). This can be done by adapting to $J^\infty$ the rising and lowering projectors for $\mathbb{C}P^{N-1}$ models \cite[p. 187]{GG10}, namely
\begin{equation}
\Pi_-(\theta):=\frac{\bar{D}\theta(\mathcal{F}-i\theta)D\theta}{\tr(\bar{D}\theta(\mathcal{F}-i\theta)D\theta)},\qquad
\Pi_+(\theta):=\frac{D\theta(\mathcal{F}-i\theta)\bar{D}\theta}{\tr(D\theta(\mathcal{F}-i\theta)\bar{D}\theta)},
\qquad \mathcal{F}:=\frac{1}{N}{\rm I}_N,
\end{equation}
where the traces in the denominators are different from zero for $\theta\neq 0$ unless the whole matrix is zero and $D:=(D_x+iD_y)/2$ and $\bar D:=(D_x-iD_y)/2$. Then, a family of spectral solutions $\Phi_k([\theta],\lambda)$ to the LSP (\ref{2.6}) read as (cf. \cite{GLM16})
\begin{equation}\label{parsolinf}
\begin{array}{l}
\Phi_k([\theta],\lambda):={\rm Id}_N+\frac{4\lambda}{(1-\lambda)^2}\sum_{j=0}^{k-1}\Pi_-^j(\theta)-\frac{2}{1-\lambda}\left(\frac{1}{N}{\rm Id}_N-i\theta\right)\in SU(N),
\end{array}
\end{equation}
where $k=\overline{0,N-1}$. The function $\Phi_0([\theta],\lambda)$ is holomorphic, $\Phi_{N-1}([\theta],\lambda)$ is anti-holomorphic and the remaining ones $k=\overline{1,N-2}$ are mixed.

For any real functions $f$ and $g$ of one variable, the equations of motion (\ref{EL}) admit the Lie symmetries given by the prolongations to $J^2$ of the vector fields
\begin{equation}
X_{C}=\left[f(x)\partial_x\theta^j+g(y)\partial_y\theta^j\right]\frac{\partial}{\partial\theta^j},
\end{equation}
which are called {\it conformal} as they vanish on functions depending only on the independent variables $x,y$. 
The integrated form of the immersed surface in $\mathfrak{su}(N)$ induced by the above symmetry is given by the FG formula \cite[p. 15]{GPR14}. Using (\ref{parsolinf}) for $k=0$, we obtain
\begin{equation}
F^{FG}=\Phi^{-1}\left(f(x)\hat U_x+g(y)\hat U_y\right)\Phi\in\mathfrak{su}(N).
\end{equation}

There is another possible approach to take into account. It turns out that the spectral $\mathfrak{su}(N)$-valued differential form
$$
\Upsilon:={\rm Ad}_{\Phi^{-1}}(-[\partial_x\theta,\theta]{\rm d}x+[\partial_y\theta,\theta]{\rm d}y)
$$
satisfies ${\bf d}_{2\omega}\Upsilon=0$ on-shell. As a consequence, it provides a deformation giving rise to an immersion surface of the form ${\rm d}F=-[\partial_x\theta,\theta]{\rm d}x+[\partial_y\theta,\theta]{\rm d}y$. Hence, $F$ is given by integrating this form
$$
F(x,y)=\int_\gamma(-[\partial_x\theta,\theta]{\rm d}x+[\partial_y\theta,\theta]{\rm d}y).
$$
This is indeed the expression in the set of coordinates $\{x,y\}$ on $\mathcal{R}$ of the {\it Weirstrass immersion formula} for $\mathbb{C}P^{N-1}$ models \cite{KL99}.

\subsection{Soliton surfaces associated with the $\mathbb{C}P^2$ sigma model}
Let us analyse the geometric properties of immersion surfaces for the particular $\mathbb{C}P^{N-1}$ sigma model with $N=3$ by using our techniques. The obtained surface is one of the elements of the so-called {\it Veronese sequence} \cite{BJRW88,DV00}. In this case, the projector $P$ given by (\ref{Pf}) for $k=0$ can be written in terms of the holomorphic function $f_0:=(1,\sqrt{2}\xi,\xi^2)$. One obtains that
$$
P_0=\frac{f_0\otimes f_0^\dagger}{f_0^\dagger f_0}=\frac{1}{(1+|\xi|^2)^2}\left(\begin{array}{ccc}1&\sqrt{2}\bar \xi&\bar \xi^2\\\sqrt{2}\xi&2|\xi|^2&\sqrt{2}|\xi|^2\bar \xi\\\xi^2&\sqrt{2}|\xi|^2\xi&|\xi|^4\end{array}\right).
$$
Hence, $X_0=i\left({\rm Id}_3/3-P_0\right)$ and therefore
$$
X_0=\frac{i}{(1+|\xi|^2)^2}\left(\begin{array}{ccc}\frac 13(|\xi|^4+2|\xi|^2-2)&-\sqrt{2}\bar \xi&-\bar \xi^2\\ -\sqrt{2}\xi&\frac 13 (|\xi|^4-4|\xi|^2+1)&-\sqrt{2}|\xi|^2\bar \xi\\ -\xi^2&-\sqrt{2}|\xi|^2\xi&\frac 13(1-2|\xi|^4+2|\xi|^2)\end{array}\right).
$$
This is an immersion surface which is an anti-Hermitian solution of the equation:
\begin{equation}\label{sol}
X_0^2+\frac{i}{3}X_0+\frac 29{\rm Id}_3=0.
\end{equation}
By considering the basis $S_i$ for $\mathfrak{su}(3)$ given by the orthonormalization of the basis in (\cite[p. 11]{GSZ06}) relative to the Killing metric on $\mathfrak{su}(3)$ of the form $\langle A,B\rangle:=-{\rm Tr}(AB)/2$, we obtain that every $X_0=\sum_{\alpha=1}^8x_\alpha S_\alpha$ satisfying the matrix equation (\ref{sol}) is such that its coordinates obey the following independent equations: 
\begin{eqnarray}
\sum_{i=1}^8x_i^2=\frac 13,\label{eq1}\\
\frac 43 \left(x_4-\frac {1}{4\sqrt{3}}\right)^2+\sum_{i=5}^8x_i^2=\frac 14,\label{eq2}\\
\sum_{i=1}^3x_i^2+\frac 13x_4^4+x_6^2+x_8^2+\frac 13x_3+\frac 1{3\sqrt{3}}(6x_3+1)x_4=\frac 29,\label{eq3}\\
-3x_3^3-6(\sqrt{3}x_4+1)x_3-9x_4^2+2\sqrt{3}x_4+1=0.\label{eq6}
\end{eqnarray}
The first one is obtained by considering the trace of (\ref{sol}). The second and third ones come from analyzing the diagonal elements of (\ref{sol}). Other  conditions on the coordinates of $X_0$, e.g.
\begin{eqnarray}
-\frac 13x_2(2\sqrt{3}x_4+1)-x_6x_7+x_5x_8=0,\label{eq4}\\ 
\frac 13(2\sqrt{2}x_4+1)x_1-x_5x_6-x_7x_8=0,\label{eq5}
\end{eqnarray}
arise from analyzing all entries of the equation (\ref{sol}).

Previous conditions allow us to determine the shape of the immersed surface. The equation (\ref{eq1}) represents a sphere in $\mathbb{R}^8$. To obtain the Veronese surface one has to intersect the sphere (\ref{eq1}) by an ellipsoidal cylinder described by (\ref{eq2}) (the generatrix lines parallel to the $(x_1,x_2,x_3)$ axes) and a degenerate hyperbola given by (\ref{eq3}). The equation (\ref{eq2}) shows that  $-1/(2\sqrt{3})\leq x_4\leq 1/\sqrt{3}$ while (\ref{eq6}) allows us to determine a discrete set of values of $x_3$ for those possible values of $x_4$. Once particular values of $x_3$ and $x_4$ are determined, equations (\ref{eq1})--(\ref{eq6}) enable us to ensure that the coordinates $x_1,x_2,x_6,x_8,x_5,x_7$ of our surface are contained in the two-dimensional spheres of the form
$$
x_1^2+x_2^2=F_1(x_3,x_4),\qquad  x_6^2+x_8^2=F_2(x_3,x_4),\qquad x_7^2+x_5^2=F_3(x_3,x_4),
$$
for certain functions $F_1,F_2,F_3$. The final expression for the immersed surfaces can be obtained by using the remaining conditions, e.g. (\ref{eq4}), (\ref{eq5}), resulting from (\ref{sol}).

Instead of following the above procedure, let us now analyse the immersed surface by using global and local invariants for surfaces. The parametrization of the surface in terms of the complex variable $\xi=x+iy$ is conformal, namely the Lie derivative in terms of $\partial_{\bar \xi}$ of the fundamental form
$$
I=4(1+x^2+y^2)^{-2}({\rm d}x\otimes {\rm d}x+{\rm d}y\otimes {\rm d}y)=2(1+|\xi|^2)^{-2}({\rm d}\xi\otimes {\rm d}\bar \xi+{\rm d}\bar \xi\otimes {\rm d}\xi),
$$
which is the metric of the sphere, vanishes.

Some simple calculations permit us to obtain that the Gaussian curvature of our immersed surface is $\kappa=2$ while the Euler-Poincar\'e character is $\chi=2$. The mean curvature tensor of the surface, $\mathcal{H}$, is anti-Hermitian and orthogonal, relative to the metric on $\mathfrak{su}(3)$, to $\partial X_0,$ $\bar \partial X_0$ and $X_0$. The norm of $\mathcal{H}$ is 16. As a consequence, the Veronese surface has positive constant mean curvature. Meanwhile, the Willmore functional $Q=2$ indicates that the winding number of a submanifold of the surface is at least equal to $2$. This happens for the projection by $\pi:(x_1,\ldots,x_8)\in \mathbb{R}^8\simeq \mathfrak{su}(3)\mapsto (x_4,x_6,x_8)\in \mathbb{R}^3$ of the Veronese surface.

Let us analyse a second possible immersed surface induced by a different spectral solution. It is well known that $f_1:=(\partial P_0) f_0$ gives rise to a new solution $P_1$ of the $\mathbb{C}P^2$ model. In other words, the function
$$
f_1=\frac{1}{1+|\xi|^2}(-2\bar \xi,\sqrt{2}(1-|\xi|^2),2\xi).
$$
gives rise to the particular solution
$$
P_1=\frac{f_1\otimes f_1^\dagger}{f_1^\dagger f_1}=\frac{1}{(1+|\xi|^2)^2}\left(\begin{array}{ccc}2|\xi|^2&\sqrt{2}(|\xi|^2-1)\bar \xi&-2\bar \xi^2\\\sqrt{2}(|\xi|^2-1)\xi&(|\xi|^2-1)^2&-\sqrt{2}(|\xi|^2-1)\bar \xi\\-2\xi^2&-\sqrt{2}(|\xi|^2-1)\xi&2|\xi|^2\end{array}\right).
$$
This leads to the immersion formula
$$
X_1=-i(P_1+2P_0)+i{\rm Id}_3=
\frac{i}{(1+|\xi|^2)^2}\left(\begin{array}{ccc}|\xi|^2-1&\sqrt{2}\bar\xi&0\\-\sqrt{2}\xi&0&-\sqrt{2}\bar \xi\\0&-\sqrt{2}\xi&-(|\xi|^2-1)\end{array}\right),
$$
which is an anti-Hermitian solution to 
$$
X_1^3+X_1=0.
$$
In the already used Gell-Mann orthonormal basis $\{S_\alpha\}_{\alpha\in \overline{1,8}}$ of $\mathfrak{su}(3)$, the coordinates of the above $X_1$ satisfy
\begin{equation}\label{eq1bis}
x_3=x_4/\sqrt{3},\quad x_6=0,\quad x_8=0,\quad x_1^3+1=0.
\end{equation}
Since $X_1\neq 0$, the set of equations describing a non-singular manifold reduce to 
\begin{eqnarray}
\sum_{i=1}^7x_i^2-x_6^2=1,\label{eq2bis}\\
x_5^2+x_7^2+\frac 23x_4^2=\frac 12,\label{eq3bis}\\
x_1^4-\left(x_2^2+\frac 23x_4^2-\frac 12\right)^2=0\label{eq4bis}.
\end{eqnarray}
In this case, one obtains that the six previous independent conditions  (\ref{eq1bis})--(\ref{eq4bis}) on $\mathfrak{su}(3)$, which is eight-dimensional, give rise to a surface. 
The condition (\ref{eq4bis}) shows that the immersed surface is a closed fourth degree curve in the coordinates $x_1,x_2$ parametrized by 
$$
-\frac{\sqrt{3}}2<x_4\leq \frac{\sqrt{3}}2,
$$
where the above bounds for $x_4$ are a consequence of (\ref{eq3bis}). 

The metric of the surface is that one of the sphere, as in the previous example, with Gaussian curvature $\kappa=2$. However, the mean curvature is $\langle \mathcal{H},\mathcal{H}\rangle=4$, where $\mathcal{H}$ is the mean curvature tensor of the surface. This implies a smaller value of the Willmore functional $W=2\pi$ relative to the Veronese surface, but a topological charge $Q=0$. 

A last anti-holomorphic solution to the $\mathbb{C}P^2$ model can be found
$$
P_2:=i{\rm Id}_3-P_0-P_1=\frac{1}{1+|\xi|^2}\left(\begin{array}{ccc}|\xi|^4&-\sqrt{2}|\xi|^2\bar \xi&\bar \xi^2\\ -\sqrt{2}|\xi|^2\xi&2|\xi|^2&-\sqrt{2}\bar \xi\\ \xi^2&-\sqrt{2}\xi&1\end{array}\right),
$$
which leads to an immersed surface
$$
\begin{aligned}
X_2:&=-i(P_2+2(P_0+P_1))+i\frac 53 {\rm Id}_3\\
&=\frac{i}{(1+|\xi|^2)^2}\left(\begin{array}{ccc}-\frac 13[1-2|\xi|^4+2|\xi|^2]&-\sqrt{2}|\xi|^2\bar \xi&\bar {\xi}^2\\ -\sqrt{2}|\xi|^2\xi&-\frac 13[1+|\xi|^4-4|\xi|^2]&-\sqrt{2}\bar \xi\\\xi^2&-\sqrt{2}\xi&-\frac 13[-2+|\xi|^4+2|\xi|^2]\end{array}\right).
\end{aligned}
$$
Since this surface, which appears as a consequence of the application of the standard techniques for $\mathbb{C}P^{N-1}$ sigma models, is a linear combination of previous ones, its analysis will be not accomplished.

\section{Concluding remarks and outlook}\setcounter{equation}{0}

This paper has presented a cohomological approach to the study of immersed soliton surfaces for integral systems of PDEs through several types of spectral $\mathfrak{g}$-valued differential forms. This has revealed several properties of such surfaces while providing an extension of known methods to general soliton submanifolds for more general systems of integrable PDEs. Our work has also shed some light on the jet approach to the study of immersed soliton surfaces. The $\mathbb{C}P^{N-1}$ sigma models have been studied through our techniques.

In the future, we aim to expand our analysis in several directions. First, it is interesting to investigate whether soliton surfaces are stable and they can therefore be observable in nature. This future study would attempt to devise an appropriate perturbation theory and suggest the development of good approximate solutions. Second, it is worth noting that analytic expressions for surfaces may reveal quality features that otherwise might be difficult to detect numerically. In this respect, we aim to develop computer techniques for the visualization of soliton surfaces. It is also natural to ask how the integrable characteristics -- Hamiltonian structures, conserved quantities, singularities structures and so on -- are manifest on surfaces. Third, we are interested in  problems whose soliton surfaces are well-known experimentally but the associated physical system is not fully developed. Hence, we propose to use the variational problem of the geometric functional, i.e. the Willmore functional interpreted as an action functional, to compute the Euler-Lagrange equations determining the surface. This approach has been accomplished for biological membrane models obtained via the generalized Weierstrass representation \cite{KL99}, but not, to our knowledge, for immersion formulas for soliton surfaces using our approach. Finally, the cohomological approach may be further developed and the properties of the used cohomologies must be studied in detail, e.g. the description of more general Poincar\'e Lemmas. This will be the topic of a future work.


\section{Acknowledgements}
A.M. Grundland was partially supported by the research grant ANR-11LABX-0056-LMHLabEX LMH (France) and
from the NSERC (Canada). J. de Lucas acknowledges partial support from the project MAESTRO DEC-2012/06/A/ST1/00256 of the National Science Center (Poland). This work was partially accomplished during the stay of A.M. Grundland and J. de Lucas at the \'Ecole Normale Superieure de Cachan (CMLA). The authors would also like to thank CMLA for its hospitality and attention during their stay.


\end{document}